\documentclass[11pt]{amsart}
\usepackage{stmaryrd}
\usepackage[utf8]{inputenc}
\usepackage{amssymb,amsmath,amsthm,enumerate,enumitem,colonequals,mlmodern,tikz-cd,microtype,dirtytalk,hyperref}

\theoremstyle{plain}
\newtheorem{theorem}{Theorem}[section]
\newtheorem*{theorem*}{Theorem}
\newtheorem{lemma}[theorem]{Lemma}
\newtheorem{corollary}[theorem]{Corollary}
\newtheorem{proposition}[theorem]{Proposition}
\theoremstyle{definition}
\newtheorem{defn}[theorem]{Definition}
\newtheorem{remark}[theorem]{Remark}
\newtheorem{example}[theorem]{Example}
\newtheorem{criterion}[theorem]{Criterion}
\newtheorem*{ack}{Acknowledgements}{}

\newcommand{\mysetminusD}{\hbox{\tikz{\draw[line width=0.6pt,line cap=round] (3pt,0) -- (0,6pt);}}}
\newcommand{\mysetminusT}{\mysetminusD}
\newcommand{\mysetminusS}{\hbox{\tikz{\draw[line width=0.45pt,line cap=round] (2pt,0) -- (0,4pt);}}}
\newcommand{\mysetminusSS}{\hbox{\tikz{\draw[line width=0.4pt,line cap=round] (1.5pt,0) -- (0,3pt);}}}

\newcommand{\mysetminus}{\mathbin{\mathchoice{\mysetminusD}{\mysetminusT}{\mysetminusS}{\mysetminusSS}}}

\def\mcO{\mathcal{O}}
\def\mcI{\mathcal{I}}

\def\sfD{\mathsf{D}}
\def\sfE{\mathsf{E}}

\def\sfH{\mathsf{H}}

\def\sfJ{\mathsf{J}}
\def\sfK{\mathsf{K}}
\def\sfL{\mathsf{L}}
\def\sfM{\mathsf{M}}

\def\sfS{\mathsf{S}}
\def\sfT{\mathsf{T}}

\def\unit{\mathbf{1}}

\DeclareMathOperator{\Spc}{Spc}
\DeclareMathOperator{\Spec}{Spec}

\DeclareMathOperator{\supp}{supp}

\DeclareMathOperator{\loc}{\mathrm{loc}}

\DeclareMathOperator{\thick}{thick}
\DeclareMathOperator{\Thick}{Thick}
\DeclareMathOperator{\Modu}{\mathsf{Mod}}
\DeclareMathOperator{\Hom}{Hom}
\DeclareMathOperator{\RHom}{RHom}
\DeclareMathOperator{\gr}{gr}

\newcommand{\twoCAlg}{\mathrm{2CAlg}}
\newcommand{\twoCAlgrig}{\twoCAlg^{\mathrm{rig}}}

\newcommand{\Catperf}{\Cat^{\textnormal{perf}}}
\newcommand{\Cat}{\mathrm{Cat}_{\infty}}

\newcommand{\CAlg}{\operatorname{CAlg}}

\newcommand{\Ind}{\operatorname{Ind}}

\DeclareMathOperator{\QCoh}{QCoh}

\title{Distributivity, affineness, and the structure sheaf}

\author{Andy Jiang}
\address{Andy Jiang, Institute of Mathematics, Academia Sinica, 6F, Astronomy-Mathematics Building, No.\ 1, Sec.\ 4, Roosevelt Road, Da-an, Taipei
106319, Taiwan}
\email{jianga2718@gate.sinica.edu.tw}

\author{Greg Stevenson}
\address{Greg Stevenson, Aarhus University, Department of Mathematics, \newline Ny Munkegade 118, bldg.\ 1530
DK-8000 Aarhus C, Denmark
}
\email{greg@math.au.dk}

\begin{document}

\begin{abstract}
\noindent We observe that for a quasi-compact and quasi-separated scheme the structure sheaf generates the perfect complexes if and only if the lattice of thick subcategories is distributive if and only if the affinization map is 0-affine. Examples are discussed, including an example of a quasi-projective scheme which is not quasi-affine but for which these equivalent conditions hold.
\end{abstract}

\maketitle

\section{Introduction}

Hypercohomology is a fundamental invariant of a complex of $\mcO_X$-modules on a scheme $X$. It is thus a natural, and seemingly innocent, question to ask exactly how much of the unbounded derived category $\sfD(X)$ of $\mcO_X$-modules with quasi-coherent cohomology it sees. In other words, how large is the kernel of $\RHom(\mcO_X, -)$? In this paper we address the extreme case that this kernel is trivial; for such a scheme $X$ hypercohomology witnesses the non-triviality of every complex in $\sfD(X)$ that is not acyclic. This is equivalent to $\sfD(X)$ being generated by the structure sheaf $\mcO_X$ under coproducts and to $\sfD^\mathrm{perf}(X)$ being classically generated by $\mcO_X$.

There is an obvious class of schemes for which this is the case, namely the affine schemes. As we all know in this case $\RHom(\mcO_X,-)$ gives an equivalence of $\sfD(X)$ with the derived category $\sfD(\mcO_X(X))$ of modules over the global sections. A moments consideration leads one to notice that hypercohomology must also detect vanishing on quasi-affine schemes, i.e.\ quasi-compact open subschemes of affine schemes. An equivalent characterization of the quasi-affine schemes are as those schemes such that $\mcO_X$ is ample. We know that the powers of an ample line bundle will always generate $\sfD(X)$ and so one is tempted to ask if cohomology detects vanishing precisely for the quasi-affine schemes.

It turns out that this would be a bad instinct to follow (as the reader no doubt already gathered from the abstract). In Examples~\ref{ex:oxgens1} and \ref{ex:oxgens2} we exhibit a pair of schemes which are not quasi-affine and for which the structure sheaf generates: the first is non-separated and the second is quasi-projective. In particular, this shows that a line bundle can generate the derived category without having to be ample (even if there is an ample line bundle around).

Our main theorems, \ref{thm:distributive} and \ref{affinevsgen} give an abstract characterization of generation by $\mcO_X$ in terms of two rather different conditions. The first shows that $\mcO_X$ generates if and only if the lattice $\Thick(\mcO_X)$ of thick subcategories of $\sfD^\mathrm{perf}(X)$ is distributive, which is a very natural condition from the point of view of tt-geometry. The second theorem shows that generation by $\mcO_X$ is equivalent to the derived pushforward along the affinization morphism $a\colon X\to Y = \Spec \mcO_X(X)$ inducing an equivalence $\sfD(X) \cong \Modu_{\sfD(Y)}a_*\mcO_X$ so that $X$ is quasi-affine in a weak sense (a statement which should already be known to experts).

One would expect that none of these conditions could hold for proper varieties of positive dimension: there should always be some complex with vanishing hypercohomology. We show in Corollary~\ref{cor:proper} that this is indeed the case as a consequence of proving the analogous statement for proper morphisms in Theorem~\ref{thm:proper}.

\begin{ack}
We are grateful to Martin Gallauer for helpful comments on an earlier version and to Kabeer Manali Rahul for his interest and useful discussions. GS was supported by the Danmarks Frie Forskningsfond (grant ID: 10.46540/4283-00116B) during some of the period in which this work was conducted. GS also thanks Michel Van den Bergh who, many years ago, asked him about when $\mcO_X$ generates the perfect complexes.
\end{ack}


\section{Preliminaries}

Let us make some brief recollections about the terminology and notation we will need. We will use $X$ to denote a scheme, and our schemes are always assumed to be quasi-compact and quasi-separated. We denote the structure sheaf by $\mcO_X$ and use $\sfD^\mathrm{perf}(X)$ to denote the perfect complexes and $\sfD(X)$ to denote the derived category of $\mcO_X$-modules with quasi-coherent cohomology. We view these as stable $\infty$-categories (cf.\ the discussion below).

We will use the notation and terminology of \cite{HZG}. We denote by $\Catperf$ the symmetric monoidal $\infty$-category of small idempotent complete stable $\infty$-categories and exact functors; the symmetric monoidal structure corresponds to (a small variant of) the Lurie tensor product i.e.\ it is universal for multilinear exact functors. The $\infty$-category of \emph{commutative $2$-rings} is defined to be
\[
\twoCAlg = \CAlg^{\otimes}(\Catperf)
\]
the $\infty$-category of commutative algebras in $\Catperf$ with morphisms the symmetric monoidal exact functors. One can think of such a gadget as an $\infty$-categorical enhancement of a tt-category. We will not consider monoidal categories which are not symmetric and so we will often just call these categories $2$-rings.

We will also consider the full subcategory $\twoCAlgrig$ of rigid $2$-rings, i.e.\ those $2$-rings whose homotopy category is a rigid tt-category.

For a $2$-ring $\sfK$ we will use $\unit$ to denote the unit object for the monoidal structure. For an object $k\in \sfK$ we denote by $\thick(k)$ and $\thick^\otimes(k)$ the thick subcategory and thick tensor ideal it generates. We use $\loc(k)$ to denote the smallest localizing subcategory generated by an object $k$ of $\Ind(\sfK)$ and $\loc^\otimes(k)$ for the localizing tensor ideal it generates. As mentioned our primary concern is the rigid $2$-ring $\sfK = \sfD^\mathrm{perf}(X)$ and its corresponding ind-completion $\Ind(\sfK) = \sfD(X)$.

We use $\Thick(\sfK)$ and $\Thick^\otimes(\sfK)$ to denote the partially ordered sets of thick subcategories and thick ideals respectively. Both of these posets are complete lattices, i.e.\ every collection of elements $\mathcal{T}$ admits a supremum and infimum which are given by the same formulas
\[
\bigvee\mathcal{T} = \thick\left( \bigcup_{\sfT \in \mathcal{T}} \sfT\right) \text{ and } \bigwedge\mathcal{T} = \bigcap_{\sfT \in \mathcal{T}} \sfT
\]
in both $\Thick(\sfK)$ and $\Thick^\otimes(\sfK)$. We obviously have the inclusion of posets $\Thick^\otimes(\sfK) \subseteq \Thick(\sfK)$, but it may be strict. If the unit generates $\sfK$, i.e.\ we have $\thick(\unit) = \sfK$ then there is an equality $\Thick^\otimes(\sfK) = \Thick(\sfK)$

\begin{remark}
In the above discussion we should also really consider the radical tensor ideals as well, but our main focus will be the rigid setting in which case one can safely ignore this condition.
\end{remark}

The lattice $\Thick(\sfK)$ is \emph{distributive} if for any three thick subcategories $\sfM_1,\sfM_2,\sfM_3$ we have
\[
\sfM_1 \wedge (\sfM_2 \vee \sfM_3) = (\sfM_1 \wedge \sfM_2) \vee (\sfM_1 \wedge \sfM_3)
\]
or equivalently the dual equation always holds
\[
\sfM_1 \vee (\sfM_2 \wedge \sfM_3) = (\sfM_1 \vee \sfM_2) \wedge (\sfM_1 \vee \sfM_3).
\]

Further information on lattices and their relevance to tt-geometry can be found in \cite{ttnotes}.

%
%
%
%

\section{Tricks for manipulating subcategories}

We begin with some general observations that apply for arbitrary commutative $2$-rings and then apply these in the context of algebraic geometry. In fact, these are all things that take place in the homotopy category and are valid for plain tt-categories.

\begin{lemma}\label{lem:trick1}
Let $\sfK$ be a $2$-ring and $\sfL$ a thick subcategory of $\sfK$. If $\sfM$ is a thick ideal of $\sfK$ such that $\unit$ generates $\sfK/\sfM$ then $\sfL \vee \sfM$ is an ideal.
\end{lemma}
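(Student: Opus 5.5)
We will show that $\sfN = \sfL \vee \sfM = \thick(\sfL \cup \sfM)$ is closed under tensoring with an arbitrary object $k \in \sfK$. For a fixed object $x$, the collection
\[
\sfS_x = \{ k \in \sfK : k \otimes x \in \sfN\}
\]
is a thick subcategory of $\sfK$, because $-\otimes x$ is exact and preserves summands. Likewise, for a fixed $k$, the collection $\{x : k\otimes x\in \sfN\}$ is thick. So it is enough to check that $k\otimes x \in \sfN$ whenever $x$ lies in the generating set $\sfL\cup\sfM$. If $x\in\sfM$, then $k\otimes x\in\sfM\subseteq\sfN$, since $\sfM$ is an ideal. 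The remaining work is the case $x = \ell \in \sfL$.

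Fix $\ell\in\sfL$ and study the thick subcategory $\sfS_\ell$. It contains $\unit$, since $\unit\otimes\ell\cong\ell\in\sfN$. It also contains $\sfM$, because $m\otimes\ell\in\sfM$ for every $m \in \sfM$. Hence $\sfS_\ell \supseteq \thick(\unit)\vee\sfM$. We now use the hypothesis that $\unit$ generates $\sfK/\sfM$, which gives $\thick(\unit)\vee\sfM = \sfK$. Concretely, the preimage in $\sfK$ of $\thick(\unit)\subseteq \sfK/\sfM$ is a thick subcategory containing $\sfM$, and by hypothesis it is everything. This preimage equals $\thick(\unit \cup \sfM)$: any thick subcategory containing $\sfM$ is the preimage of its image in the Verdier quotient, so the preimage of $\thick(\unit)$ is the smallest thick subcategory containing $\unit$ and $\sfM$. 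Therefore $\sfS_\ell = \sfK$, and so $k\otimes\ell\in\sfN$ for all $k$.

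Combining the cases, every generator $x$ of $\sfN$ satisfies $\sfS_x = \sfK$. By the thickness of $\{x : k\otimes x\in\sfN\}$ for each fixed $k$, this gives $\sfK\otimes\sfN\subseteq\sfN$, so $\sfN$ is an ideal. The main point needing care is the identification $\thick(\unit)\vee\sfM=\sfK$, i.e.\ the standard correspondence between thick subcategories of $\sfK/\sfM$ and thick subcategories of $\sfK$ containing $\sfM$. This requires $\sfM$ to be thick, so that the kernel of the quotient functor is exactly $\sfM$, and it requires taking idempotent completions into account. With idempotent completion, the preimage of a thick subcategory is thick and contains $\sfM$. Conversely, each thick subcategory containing $\sfM$ is recovered from its image: if $q(k)$ is isomorphic to $q(n)$ with $n$ in the subcategory, then $k$ and $n$ are connected by a zigzag whose cones lie in $\sfM$. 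Everything else is a formal thick-subcategory induction carried out in the homotopy category.
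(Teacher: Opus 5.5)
Your proof is correct, but it is organized differently from the paper's. The paper argues lattice-theoretically through the quotient. It identifies thick ideals of $\sfK$ containing $\sfM$ with thick ideals of $\sfK/\sfM$, and notes these are all thick subcategories of $\sfK/\sfM$ because $\unit$ generates the quotient. It then identifies those with thick subcategories of $\sfK$ containing $\sfM$. Since both outer bijections are ``take the preimage along the quotient functor'', every thick subcategory containing $\sfM$ is an ideal.

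You instead stay inside $\sfK$ and run the standard two-variable argument with the thick subcategories $\sfS_x=\{k : k\otimes x\in \sfL\vee\sfM\}$. You use the quotient only once, to obtain $\thick(\unit\cup\sfM)=\sfK$. In effect you inline, relative to $\sfM$, the fact the paper uses as a black box: in a $2$-ring generated by its unit, every thick subcategory is an ideal.

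Your route isolates the exact input needed, namely $\thick(\unit\cup\sfM)=\sfK$. It avoids the induced tensor structure on $\sfK/\sfM$ and the correspondence for thick ideals. It also visibly gives the same stronger conclusion as the paper, by taking $\sfL$ to be any thick subcategory containing $\sfM$. The paper's route is shorter and more conceptual once the standard correspondences are granted.

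One small point on your last paragraph: your sketch only treats isomorphisms $q(k)\cong q(n)$ in the naive Verdier quotient. In the idempotent-complete quotient, $q(k)$ may only be a summand of some $q(n)$, and this is exactly the case your step $q^{-1}(\thick(q\unit))\subseteq\thick(\unit\cup\sfM)$ needs. It reduces to the isomorphism case because, for a summand $y$ of an object of the naive quotient, $y\oplus\Sigma y$ already lies in the naive quotient. Take the cone of the complementary idempotent and apply it to $q(k)\oplus y\cong q(n)$. This is the same standard correspondence the paper cites without proof, so it is not a gap.
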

\begin{proof}
We have poset isomorphisms
\begin{align*}
\{\sfJ \in \Thick^\otimes(\sfK) \mid \sfM \subseteq \sfJ\} &\cong \Thick^\otimes(\sfK/\sfM) \\
&= \Thick(\sfK/\sfM) \\
&\cong \{\sfE \in \Thick(\sfK) \mid \sfM \subseteq \sfE\}
\end{align*}
the first because $\sfM$ is an ideal, the second because $\unit$ generates $\sfK/\sfM$, and the last because $\sfM$ is thick. Moreover, the middle identification is an honest equality and the assignments giving the first and last bijections are the same\textemdash{}one just takes the preimage along the quotient functor. Thus the thick ideals containing $\sfM$ are precisely the thick subcategories containing $\sfM$.
\end{proof}

\begin{lemma}\label{lem:trick2}
Let $\sfK$ be a rigid $2$-ring and $\sfM$ a thick ideal of $\sfK$ such that $\unit$ generates $\sfK/\sfM$. Then if $A\in \sfK$ with $\supp A = \Spc \sfK$ we have
\[
\thick(A, \sfM) = \sfK.
\]
\end{lemma}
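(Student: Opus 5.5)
Combine Lemma~\ref{lem:trick1} with the classification of radical thick ideals by support in the rigid setting. Set $\sfL = \thick(A)$. By Lemma~\ref{lem:trick1}, applied with this $\sfL$ and the given $\sfM$, the thick subcategory
\[
\thick(A,\sfM) = \sfL \vee \sfM
\]
is a thick tensor ideal of $\sfK$. So it suffices to show that the thick ideal generated by $A$ and $\sfM$ is all of $\sfK$, i.e.\ that it contains $\unit$.

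Since $\sfK$ is rigid, every thick ideal is radical. By Balmer's classification, a radical thick ideal is determined by the supports of its objects: an object $B$ lies in the radical thick ideal generated by a set $S$ whenever $\supp B \subseteq \bigcup_{s\in S} \supp s$. Equivalently, $\thick^\otimes(A) = \{B \mid \supp B \subseteq \supp A\}$ in the rigid case. The ideal $\thick(A,\sfM)$ contains $A$, hence contains $\thick^\otimes(A)$. Because $\supp A = \Spc \sfK$, and $\supp \unit = \Spc\sfK \subseteq \supp A$, we get $\unit \in \thick^\otimes(A) \subseteq \thick(A,\sfM)$. A thick ideal containing $\unit$ contains $\unit \otimes k = k$ for every $k$, so $\thick(A,\sfM) = \sfK$.

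The only point needing care is that the support criterion, $\supp B \subseteq \supp A \Rightarrow B \in \thick^\otimes(A)$, really needs radicality. This is standard: in a rigid tt-category every object is a summand of $B\otimes B^\vee\otimes B$, so every thick ideal is radical and Balmer's theorem applies. Note that $\sfM$ is used only through Lemma~\ref{lem:trick1}, to upgrade the thick subcategory $\thick(A,\sfM)$ to an ideal. This upgrading step is the main content; once it holds, the conclusion is immediate from full support.
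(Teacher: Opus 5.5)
Your proof is correct and follows the paper's argument exactly: Lemma~\ref{lem:trick1} makes $\thick(A,\sfM)$ a thick ideal, rigidity makes it radical, and full support of $A$ then forces it to be all of $\sfK$. The only difference is that you spell out the radicality and support-classification step, which the paper leaves implicit.
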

\begin{proof}
By Lemma~\ref{lem:trick1} we see that $\thick(A, \sfM)$ is an ideal of $\sfK$, which is automatically radical since $\sfK$ is rigid. By hypothesis $A$ has full support and so this ideal must be $\sfK$.
\end{proof}

We will also need the following standard result about presentable stable $\infty$-categories.

\begin{lemma}\label{lem:trick3}
Let $F\colon \sfS \to \sfT$ be an exact functor of presentable stable $\infty$-categories with right adjoint $G$. Then $G$ is conservative if and only if $F$ preserves generators, i.e.\ it sends any generating set for $\sfS$ to a generating set for $\sfT$. Moreover, if $F$ sends some generating set of $\sfS$ to a set of generators for $\sfT$ then this is true for any generating set, that is $F$ preserves generation.
\end{lemma}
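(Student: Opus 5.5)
The argument is the usual Hom-adjunction: a set generates a presentable stable $\infty$-category exactly when its right orthogonal vanishes. Fix the convention that a set $\mathcal{G}$ of objects of a presentable stable $\infty$-category $\sfT$ \emph{generates} if an object $t$ with $\Hom(\Sigma^n g, t) = 0$ for all $g\in\mathcal{G}$ and $n\in\mathbb{Z}$ must be zero. For presentable stable categories this is equivalent to $\loc(\mathcal{G}) = \sfT$, via Brown representability / the localization theory of presentable stable categories. Since $F$ is a left adjoint it preserves colimits, and exactness gives $F\Sigma \simeq \Sigma F$. Adjunction then yields natural equivalences
\[
\Hom_\sfT(\Sigma^n F s, t) \simeq \Hom_\sfT(F\Sigma^n s, t) \simeq \Hom_\sfS(\Sigma^n s, Gt).
\]
All three claims rest on this identity.

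First suppose $G$ is conservative, and let $\mathcal{G}$ generate $\sfS$. Take $t\in\sfT$ orthogonal to $F\mathcal{G}$ in all degrees. By the displayed identity, $Gt$ is orthogonal to all shifts of $\mathcal{G}$, so $Gt = 0$. Now $G$ is exact, hence $G(t\to 0)$ is an equivalence, and conservativity forces $t = 0$. Therefore $F\mathcal{G}$ generates $\sfT$, and this holds for every generating set $\mathcal{G}$.

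Conversely, suppose $F$ sends \emph{some} generating set $\mathcal{G}$ to a generating set of $\sfT$. Let $f\colon t\to t'$ satisfy that $Gf$ is an equivalence, and let $c$ be the cofiber of $f$. Since $G$ is exact, $Gc\simeq \operatorname{cofib}(Gf) = 0$. The identity then gives $\Hom(\Sigma^n Fg, c) = 0$ for all $g\in\mathcal{G}$ and all $n$. Hence $c = 0$, so $f$ is an equivalence and $G$ is conservative. This is the converse of the first claim, and combining the two directions proves the ``moreover'' statement: if one generating set goes to a generating set, then $G$ is conservative, so every generating set does.

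The only delicate point is the definition of generation. If generation is meant in the localizing sense, $\loc(\mathcal{G}) = \sfT$, we need it to agree with the orthogonality definition. One direction is immediate: the right orthogonal of $\mathcal{G}$ is closed under colimits and extensions, so it also annihilates $\loc(\mathcal{G})$. For the other direction, note that $\loc(\mathcal{G})$ is presentable, so its inclusion has a right adjoint, and each object of $\sfT$ sits in a cofiber sequence whose third term is orthogonal to $\mathcal{G}$. This standard fact is the one thing I would cite rather than reprove; beyond it, the proof is formal.
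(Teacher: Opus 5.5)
Your proof is correct and is essentially the paper's argument. Both rest on the adjunction isomorphism $\sfT(Fs,\Sigma^i t)\cong \sfS(s,\Sigma^i Gt)$ together with the fact that an exact functor is conservative exactly when its kernel vanishes, and both obtain the ``moreover'' clause by first deducing that $G$ is conservative from a single good generating set. The differences are cosmetic: you work with generating sets directly instead of reducing to a single generator via coproducts, and you add a remark reconciling the orthogonality and localizing notions of generation. In that remark, strictly speaking, it is the class of objects annihilated by a fixed $t\in\mathcal{G}^\perp$ that is closed under colimits, not $\mathcal{G}^\perp$ itself.
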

\begin{proof}
Assume that $G$ is conservative, which is equivalent to $\ker G = 0$. We see that if $s$ is a generator for $\sfS$ then for any $0\neq t\in \sfT$ the object $Gt$ is also non-zero by assumption and so there is some $i\in \mathbb{Z}$ with
\[
0\neq \sfS(s,\Sigma^i Gt) \cong \sfT(Fs,\Sigma^i t).
\]
Thus $Fs$ witnesses that $t$ is non-zero as required. It is enough to treat the case of a single generator as we can take the coproduct of a set of generators to get a single generator and $F$ preserves coproducts.

On the other hand, suppose that $s$ generates $\sfS$. If $Fs$ generates $\sfT$ then the adjunction formula, as above, shows that $Gt$ can only vanish if $t=0$. In particular, as soon as $F$ sends some generator of $\sfS$ to a generator for $\sfT$ then $G$ is conservative and so $F$ preserves all generating sets by the first part of the argument. 
\end{proof}

\begin{remark}
Actually, if one looks at the argument, in order to deduce that $G$ is conservative we only needed that for some object $s\in \sfS$, not necessarily a generator, its image $Fs$ is a generator for $\sfT$. But we can just add a generator for $\sfS$ to $s$ to get that $F$ preserves some generator, and then once we know $G$ is conservative we know it preserves generation in general.
\end{remark}

We now consider the special case of the perfect and unbounded derived categories of a scheme $X$. Recall that $X$ is, at bare minimum, assumed to be quasi-compact and quasi-separated. All functors we consider are derived, but we do not indicate this explicitly in the notation. (Of course some arguments are formal, depending mostly on the projection formula and tt-geometry concepts, but our interest is schemes so we leave such generalizations to the interested reader.)

For a Thomason subset $V\subseteq X$, i.e.\ a union of closed subsets each having quasi-compact open complement, we denote by $\sfD_V^\mathrm{perf}(X)$ the tensor ideal of perfect complexes supported on $V$ and by $\sfD_V(X)$ its ind-completion. 

\begin{lemma}\label{lem:gencriterion1}
Let $E$ be an object of $\sfD(X)$. Then $E$ generates $\sfD(X)$ if and only if there exists a closed subset $V$ with affine open complement $U$ such that $E\vert_U$ generates $\sfD(U)$ and $\sfD_V(X) \subseteq \loc(E)$. 
\end{lemma}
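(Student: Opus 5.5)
The plan is to use the localization sequence attached to a closed subset $V$ with quasi-compact open complement $U$ (Thomason subset, since $U$ is quasi-compact as an affine):
\[
\sfD_V(X) \longrightarrow \sfD(X) \xrightarrow{\ j^*\ } \sfD(U),
\]
where $j\colon U\to X$ is the inclusion. Here $j^*$ is a Verdier (Bousfield) localization with fully faithful right adjoint $j_*$, and kernel exactly $\sfD_V(X)$. For generation of $\sfD(X)$ by $E$ I mean $\loc(E) = \sfD(X)$. This is equivalent to the vanishing of the right orthogonal, since $\sfD(X)$ is compactly generated and localizing subcategories generated by a set are the left halves of Bousfield localizations.

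\emph{The ``if'' direction.} Assume $E\vert_U$ generates $\sfD(U)$ and $\sfD_V(X)\subseteq \loc(E)$. For any $F\in\sfD(X)$ take the localization triangle
\[
\Gamma_V F \to F \to j_*j^*F \to.
\]
Since $\Gamma_V F\in \sfD_V(X)\subseteq \loc(E)$, it suffices to show $j_*j^*F\in\loc(E)$, i.e.\ that $j_*\sfD(U)\subseteq \loc(E)$.

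The functor $j_*$ preserves coproducts, because $j$ is quasi-compact and quasi-separated, so $j_*\loc(E\vert_U) \subseteq \loc(j_*j^*E)$. By hypothesis $\loc(E\vert_U)=\sfD(U)$. Applying the triangle to $F=E$ gives $j_*j^*E\in\loc(\Gamma_V E, E)\subseteq \loc(E)$. Hence $j_*\sfD(U)\subseteq\loc(E)$, and therefore $\loc(E)=\sfD(X)$.

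\emph{The ``only if'' direction.} Assume $E$ generates $\sfD(X)$. I take $U$ to be any affine open (if $X=\emptyset$ this is trivial), or even $U=\emptyset$, $V=X$. Then $j^*$ is a coproduct-preserving exact functor whose right adjoint $j_*$ is conservative, being fully faithful. By Lemma~\ref{lem:trick3}, $j^*$ preserves generation, so $E\vert_U$ generates $\sfD(U)$. The containment $\sfD_V(X)\subseteq \sfD(X)=\loc(E)$ is immediate. One could even just take $V=X$ and $U=\emptyset$, which is affine, so this direction is essentially trivial.

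\emph{Main obstacle.} The real content sits in the ``if'' direction, specifically the step $j_*\loc(E\vert_U)\subseteq\loc(j_*j^*E)$. This needs $j_*$ to commute with coproducts on $\sfD(U)$. That holds because $j$ is a quasi-compact, quasi-separated morphism between qcqs schemes, so derived pushforward on quasi-coherent complexes preserves coproducts (Neeman, Lipman). The same step also needs the standard identification of $\ker j^*$ with $\sfD_V(X)$, the ind-completion of $\sfD^\mathrm{perf}_V(X)$, which is Thomason–Trobaugh/Neeman localization. I would cite these facts rather than reprove them. Affineness of $U$ plays no role in the argument itself; it only makes the criterion usable later, where $\mcO_U$ is known to generate $\sfD(U)$.
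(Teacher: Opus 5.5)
Your proof is correct and follows the paper's approach. The paper argues that since $\loc(E)\supseteq\sfD_V(X)$, the subcategory $\loc(E)$ is determined by its image in $\sfD(U)=\sfD(X)/\sfD_V(X)$, namely $\loc(E\vert_U)=\sfD(U)$; your triangle $\Gamma_V F\to F\to j_*j^*F$, combined with $j_*$ preserving coproducts, is an explicit unpacking of that step, and your converse via Lemma~\ref{lem:trick3} (or the trivial choice $U=\varnothing$) matches the paper's.
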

\begin{proof}
If $E$ is a generator then both assertions are obvious (cf.\ Lemma~\ref{lem:trick3} for the former). Hence suppose we are given a $V$ as in the statement. Because $\sfD_V(X) \subseteq \loc(E)$, the localizing subcategory $\loc(E)$ is determined by its image $\loc(E)/\sfD_V(X)$ in the localization $\sfD(U) = \sfD(X)/\sfD_V(X)$. By assumption this is all of $\sfD(U)$ and so $\loc(E) = \sfD(X)$.
\end{proof}

\begin{remark}
Of course, if the above holds for one such $V$ then it holds for all such $V$.
\end{remark}

\begin{remark}
Because affine schemes are quasi-compact such a $V$ is necessarily a Thomason closed subset.
\end{remark}

\begin{remark}
In particular, if $E$ is vector bundle (non-zero on every connected component) then we see that $E$ generates precisely when it builds all complexes with cohomology supported on the complement of an affine open.
\end{remark}

\begin{lemma}\label{lem:pushgen}
Let $X$ be a noetherian scheme and $i\colon V\to X$ a closed subscheme of $X$. If $E\in \sfD(V)$ is a generator for $\sfD(V)$ then in $\sfD(X)$ we have $\loc(i_*E) = \sfD_V(X)$.
\end{lemma}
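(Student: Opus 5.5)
The plan is to prove the two inclusions separately, writing $\sfD_V(X)$ for the compactly generated subcategory of complexes supported on $V$. This subcategory coincides with $\loc(\sfD^\mathrm{perf}_V(X))$, and it can also be described as the kernel of restriction to $U = X\setminus V$.

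The inclusion $\loc(i_*E)\subseteq \sfD_V(X)$ is immediate. The object $i_*E$ has cohomology supported on $V$, so it restricts to zero on $U$, and $\sfD_V(X)$ is localizing.

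For the reverse inclusion, I will use a dévissage on the nilpotent thickenings of $V$. Let $\mathcal{I}$ be the ideal sheaf of $V$ and $V_n$ the closed subscheme cut out by $\mathcal{I}^n$, with $i_n\colon V_n\to X$.

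\emph{Step 1.} Show that $\sfD_V(X)$ is generated by the objects $i_{n*}F$ with $F\in\sfD(V_n)$ and $n\geq 1$. Since $X$ is noetherian, $\sfD_V(X)$ is compactly generated by perfect complexes supported on $V$. Each such complex $P$ has coherent cohomology sheaves supported on $V$. Each of these is annihilated by some power $\mathcal{I}^n$, and hence is pushed forward from $V_n$. A perfect complex has only finitely many nonzero cohomology sheaves, so the truncation triangles express $P$ as a finite extension of shifts of objects of the form $i_{n*}F$. Hence $\sfD_V(X)$ is contained in the localizing subcategory generated by these objects.

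\emph{Step 2.} Reduce from $V_n$ to $V$. Each $\mcO_{V_n}$-module $M$ has the finite filtration
\[
M\supseteq \mathcal{I}M\supseteq\dots\supseteq \mathcal{I}^{n}M=0,
\]
whose graded pieces are $\mcO_V$-modules. So every quasi-coherent sheaf on $V_n$ pushed forward to $X$ lies in the thick closure of objects $i_*G$ with $G$ a quasi-coherent sheaf on $V$. It suffices to treat sheaves: by Step 1 we may take $F$ coherent, and in any case a bounded complex is a finite extension of its cohomology sheaves. If unbounded complexes had to be handled, I would use that $\sfD(V_n)$ is generated under colimits by its bounded objects, via homotopy colimits of truncations.

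\emph{Step 3.} Use that $E$ generates $\sfD(V)$, so $\loc(E)=\sfD(V)$. The functor $i_*\colon \sfD(V)\to\sfD(X)$ is exact and preserves coproducts, so
\[
i_*\sfD(V)=i_*\loc(E)\subseteq\loc(i_*E).
\]
In particular every $i_*G$ lies in $\loc(i_*E)$. Combining Steps 1–3 gives $\sfD_V(X)\subseteq\loc(i_*E)$.

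I expect Step 1 to be the main technical point. It requires correctly identifying $\sfD_V(X)$, the ind-completion of $\sfD^\mathrm{perf}_V(X)$, with the localizing subcategory generated by perfect complexes supported on $V$. This is Thomason–Neeman compact generation of supported derived categories. It also requires the noetherian fact that coherent sheaves supported on $V$ are killed by a power of $\mathcal{I}$, which is exactly where the noetherian hypothesis enters.
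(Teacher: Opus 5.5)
Your proof is correct, but it takes a different route from the paper.

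The paper's argument is two lines of tt-geometry:
\begin{itemize}
\item the projection formula $F\otimes i_*E\cong i_*(i^*F\otimes E)$ shows that $\loc(i_*E)$ is a localizing tensor ideal;
\item $i_*E$ has support exactly $V$;
\item the classification of localizing tensor ideals of $\sfD(X)$ for noetherian $X$ then forces $\loc(i_*E)=\sfD_V(X)$.
\end{itemize}

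You never use the ideal property or the classification. You only need that $\sfD_V(X)$ is generated by $\sfD^\mathrm{perf}_V(X)$. You then do an explicit d\'evissage to land in $i_*\sfD(V)\subseteq\loc(i_*E)$: supported perfect complexes have cohomology killed by some $\mathcal{I}^n$, and the $\mathcal{I}$-adic filtration does the rest.

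The paper's version is shorter and fits the surrounding formalism, but it buries the noetherian hypothesis inside a deep theorem. Yours is elementary and pinpoints that noetherianity is used only to get annihilation by a power of $\mathcal{I}$.

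That step in fact survives when $\mathcal{I}$ is merely locally finitely generated. If $f$ is a local generator of $\mathcal{I}$ and $P$ is perfect with $P[f^{-1}]=0$, compactness of $P$ gives $f^N\cdot\mathrm{id}_P=0$ for some $N$. Using finitely many generators on finitely many affine opens, the cohomology sheaves of $P$ are again killed by a power of $\mathcal{I}$. So your argument also yields Lemma~\ref{lem:pushgen2} directly, without the Koszul complex and descent step used there.
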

\begin{proof}
It follows from the projection formula that $\loc(i_*E)$ is a tensor ideal: if $F\in \sfD(X)$ then we have
\[
F \otimes i_*E \cong i_*(i^*F \otimes E) \in i_*\sfD(V) = i_*\loc(E) \subseteq \loc(i_*E).
\]
Because $E$ is a generator for $\sfD(V)$ we have $\supp E = V$ and it then follows from the classification of localizing ideals that $\loc(i_*E) = \sfD_V(X)$.
\end{proof}

One can get away from the noetherian hypothesis as long as one is careful about the closed subscheme.

\begin{lemma}\label{lem:pushgen2}
Let $X$ be a quasi-compact and quasi-separated scheme and $i\colon V\to X$ a closed subscheme of $X$. Suppose that the sheaf of ideals $\mcI$ corresponding to $V$ is affine locally generated by finitely many elements. Then if $E\in \sfD(V)$ is a generator we have $\loc(i_*E) = \sfD_V(X)$.
\end{lemma}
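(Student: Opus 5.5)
We follow the proof of Lemma~\ref{lem:pushgen} and replace the noetherian classification of localizing ideals with an argument that uses the finite generation of $\mcI$. The plan has two inclusions.

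For the easy inclusion, $i_*E$ has cohomology supported on $V$. Since $\mcI$ is locally finitely generated, $V$ is a Thomason closed subset: its complement is quasi-compact, because on an affine $\Spec A$ it is covered by the finitely many $D(f_j)$, where the $f_j$ generate $\mcI$. Hence $\sfD_V(X)$ makes sense and equals the kernel of restriction to $X\setminus V$. This kernel is a localizing subcategory containing $i_*E$, so $\loc(i_*E)\subseteq \sfD_V(X)$.

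For the reverse inclusion, note first that the projection formula computation in the proof of Lemma~\ref{lem:pushgen} uses no noetherian hypothesis. Therefore $\loc(i_*E)$ is a localizing tensor ideal containing $i_*\sfD(V)$, and in particular it contains $i_*\mcO_V$. It then suffices to show that $\sfD_V(X)$ is the localizing ideal generated by $i_*\mcO_V$.

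To see this, I would use the fact that $\sfD_V(X)$ is compactly generated by $\sfD^\mathrm{perf}_V(X)$, and that this thick ideal is generated as a thick ideal by Koszul-type complexes. Locally, on an affine $U=\Spec A$ with $\mcI|_U=(f_1,\dots,f_n)$, the complex $K(f_1,\dots,f_n)=\bigotimes_j \mathrm{cone}(f_j)$ generates $\sfD^\mathrm{perf}_{V\cap U}(U)$ by Thomason's theorem. Each such Koszul complex is a module over $A/(f_1,\dots,f_n)$ up to homotopy. More precisely, $\mathrm{cone}(f_j)\otimes \mcO_V\simeq \mcO_V\oplus\Sigma\mcO_V$ because $f_j$ acts by zero on $\mcO_V$. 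Moreover $K$ admits a finite filtration whose graded pieces are built from $K\otimes_A A/\mcI\cong \bigoplus \Sigma^k (A/\mcI)^{\binom nk}$ tensored with powers of $\mcI$. I expect that the cleanest route is a support-based one: for $F\in\sfD_V(X)$, show that $F\otimes i_*\mcO_V=0$ forces $F=0$. Locally this holds because $F\otimes^L A/(f)$ vanishing implies that $f$ acts invertibly on the cohomology of $F$, while $F$ is $f$-torsion, so $F=0$; for several generators one iterates via the Koszul complexes. With this detection statement in hand, the Bousfield/localization argument runs as follows. Let $\Gamma$ denote the localization functor for $\loc^\otimes(i_*\mcO_V)$ inside $\sfD_V(X)$, which exists since $i_*\mcO_V$ is a set-generated localizing ideal. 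For $F\in \sfD_V(X)$, the cofiber of $\Gamma F\to F$ lies in $\sfD_V(X)$ and is killed by tensoring with $i_*\mcO_V$, hence vanishes.

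The main obstacle is this local detection step, that $F\otimes i_*\mcO_V=0$ and $F\in\sfD_V(X)$ imply $F=0$, in the non-noetherian affine setting. There I would reduce to $X=\Spec A$ by a Mayer--Vietoris or induction argument on an affine cover, using quasi-compactness and quasi-separatedness. I would then handle one generator $f$ at a time: the Koszul complex $K(f)$ lies in $\thick^\otimes(A/f)$ because $\mathrm{cone}(f)\otimes A/f$ splits, and one passes to the derived tensor product. This finite generation of $\mcI$ is exactly what is needed, and it replaces the use of the noetherian classification.
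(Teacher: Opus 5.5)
Your easy inclusion and the projection-formula step match the paper. The argument for $\sfD_V(X)\subseteq\loc(i_*E)$ has a genuine gap, though, and it sits in the step you treat as routine, not in the one you flag. Let $C$ be the cofibre of $\Gamma F\to F$. The localization tells you that $C$ is right orthogonal to $\loc^\otimes(i_*\mcO_V)$, so $\Hom(i_*M,\Sigma^n C)=0$ for all $M\in\sfD(V)$ and $n\in\mathbb{Z}$. You then assert $C\otimes i_*\mcO_V=0$. That implication holds for smashing localizations. However, $i_*\mcO_V$ is in general not perfect, so there is no a priori reason for $\loc^\otimes(i_*\mcO_V)$ to be smashing; that it equals the smashing ideal $\sfD_V(X)$ is exactly what is being proved. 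Right orthogonality does not imply tensor orthogonality in general. In $\sfD(\mathbb{Z})$ the $p$-adic integers $\mathbb{Z}_p$ are right orthogonal to the localizing ideal $\loc(\mathbb{Q})$, being derived $p$-complete, yet $\mathbb{Z}_p\otimes\mathbb{Q}=\mathbb{Q}_p\neq 0$. So your tensor-detection statement does not apply to $C$ and does not yield the conclusion. (The statement is true, though your sketch of it for several generators leans on the same fact about Koszul complexes discussed below.)

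The missing idea is the elementary one the paper uses. Take an affine open $U$ with $\mcI|_U=(f_1,\dots,f_n)$ and the Koszul complex $K=\bigotimes_j\mathrm{cone}(f_j)$. Multiplication by each $f_j$ on $K$ is null-homotopic, so every cohomology module of $K$ is killed by $\mcI|_U$, i.e.\ is the pushforward of an $\mcO_{V\cap U}$-module. Since $K$ is bounded, it is a finite extension of such objects. Hence $K$ lies in $i_*\sfD(V\cap U)=i_*\loc(E|_{V\cap U})\subseteq\loc(i_*E|_U)$. As $K$ generates $\sfD_{V\cap U}(U)$, this proves the statement locally. A \v{C}ech argument over a finite affine cover then globalizes it, using that $\loc(i_*E)$ is a tensor ideal. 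This bypasses both the detection statement and the localization functor. You could instead repair your framework by proving the Hom-version of detection, but locally that needs precisely this fact about $K$.

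Separately, your justification that $K(f)\in\thick^\otimes(A/f)$ runs backwards. The splitting $\mathrm{cone}(f)\otimes A/f\simeq A/f\oplus\Sigma A/f$ shows $A/f\in\thick^\otimes(\mathrm{cone}(f))$, not the converse. The correct reason $\mathrm{cone}(f)$ is built from $A/f$-modules is that its cohomology, $A/f$ and $\mathrm{Ann}(f)$, is killed by $f$.
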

\begin{proof}
We proceed as in Lemma~\ref{lem:pushgen} and to conclude we just need an argument that the ideal $\loc(i_*E)$ is really $\sfD_V(X)$. For any affine open subscheme $U$ of $X$ the localizing ideal $\sfD_{V\cap U}(U)$ is generated by the Koszul complex on the ideal $\mcI\vert_U$ cutting out $V\cap U$ which we have assumed is finitely generated. This Koszul complex lies in $\loc(i_*E\vert_U)$ because its cohomology is killed by $\mcI\vert_U$. Hence affine locally $i_*E$ generates $\sfD_V(X)$ and so, by the usual descent argument, it also generates it globally. 
\end{proof}

\begin{example}\label{ex:oddball}
    The hypothesis in Lemma~\ref{lem:pushgen2} is necessary. If we take 
    \[
    R = \frac{k[x_2,x_3,x_4,\cdots]}{(x_2^2, x_3^3, x_4^4,\cdots)}
    \]
    as in \cite{NeemanOddball} then $X=\Spec R$ is quasi-compact and separated and the residue field $k$ generates $\sfD(X^\mathrm{red})$ but by Neeman's results in \emph{loc.\ cit.\@} it does not generate $\sfD(X)$. 
\end{example}

\begin{lemma}\label{lem:gencriterion2}
Let $X$ be a noetherian scheme and let $E\in \sfD(X)$ have full support. Then $E$ generates $\sfD(X)$ if and only if there exists a closed subscheme $i\colon V\to X$ with affine open complement $U$ such that:
\begin{itemize}
\item[(i)] $i^*E$ generates $\sfD(V)$;
\item[(ii)] $i_*i^*E \in \loc(E)$.
\end{itemize}
\end{lemma}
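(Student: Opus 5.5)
Both directions go through Lemma~\ref{lem:gencriterion1} and Lemma~\ref{lem:pushgen}.

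\emph{Forward direction.} Suppose $E$ generates $\sfD(X)$. Choose any nonempty affine open $U$ (for instance $U=X$ if $X$ is affine, or any affine open in general) and let $V=X\setminus U$ carry the reduced induced closed subscheme structure. The functor $i^*\colon \sfD(X)\to\sfD(V)$ has conservative right adjoint $i_*$, since $i$ is a closed immersion and hence affine. By Lemma~\ref{lem:trick3}, $i^*E$ therefore generates $\sfD(V)$, which gives (i). Condition (ii) holds trivially because $\loc(E)=\sfD(X)$.

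\emph{Converse.} Suppose $V$ and $U$ satisfy (i) and (ii). We apply Lemma~\ref{lem:gencriterion1} and check its two hypotheses.

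\begin{itemize}
\item \textbf{Generation on $U$.} We need $E|_U$ to generate $\sfD(U)$. Since $U$ is affine, $\sfD(U)$ is compactly generated and its localizing subcategories are classified by supports (Neeman). So $\loc(E|_U)=\loc^\otimes(E|_U)$ is the localizing subcategory of objects supported on $\supp E\cap U=U$, which is all of $\sfD(U)$. This is where the full support hypothesis on $E$ enters. More precisely: on an affine scheme every localizing subcategory is a tensor ideal, because $\mcO_U$ generates. Hence $\loc(E|_U)=\loc^\otimes(E|_U)=\sfD_{\supp E|_U}(U)=\sfD(U)$.

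\item \textbf{Containment of $\sfD_V(X)$.} We need $\sfD_V(X)\subseteq\loc(E)$. By (i), $i^*E$ generates $\sfD(V)$. Lemma~\ref{lem:pushgen} (here we use that $X$ is noetherian) then gives $\loc(i_*i^*E)=\sfD_V(X)$. By (ii), $i_*i^*E\in\loc(E)$, so $\sfD_V(X)\subseteq\loc(E)$.
\end{itemize}

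Lemma~\ref{lem:gencriterion1} now shows that $E$ generates $\sfD(X)$.

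The main subtlety is the step that $E|_U$ generates. It relies on two facts: the full support hypothesis, and the fact that on an affine scheme localizing subcategories are automatically ideals, so that Neeman's classification applies to plain $\loc$ and not just $\loc^\otimes$. Everything else is formal.
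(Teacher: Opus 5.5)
Your proof is correct and follows the paper's argument: the forward direction uses conservativity of $i_*$ together with Lemma~\ref{lem:trick3}, and the converse combines Lemma~\ref{lem:pushgen} with (ii) to get $\sfD_V(X)\subseteq\loc(E)$ before invoking Lemma~\ref{lem:gencriterion1}. Your appeal to Neeman's classification on $U$ (valid because $U$ is affine and noetherian) just spells out the step the paper states in one line, namely that full support makes $E|_U$ a generator of $\sfD(U)$.
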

\begin{proof}
If $E$ generates $\sfD(X)$ then $i^*E$ generates $\sfD(V)$, for any closed subscheme $i\colon V\to X$, because $i_*$ is conservative (see Lemma~\ref{lem:trick3}). It is also clear that $i_*i^*E \in \loc(E)$ because $\loc(E) = \sfD(X)$.

Suppose then that $i\colon V\to X$ and $E$ verify conditions (i) and (ii) and the complement $U$ of $V$ is open affine. Because $i^*E$ generates $\sfD(V)$ Lemma~\ref{lem:pushgen} tells us that $i_*i^*E$ generates $\sfD_V(X)$. The second condition then says $\sfD_V(X) \subseteq \loc(E)$ and so Lemma~\ref{lem:gencriterion1} finishes the proof; it applies since $\supp E = X$ implies that $E|_U$ generates $\sfD(U)$ (since $U$ is affine).
\end{proof}

Using Lemma~\ref{lem:pushgen2} we can also prove a non-noetherian version with an additional hypothesis on $V$.

\begin{lemma}\label{lem:gencriterion2.5}
Let $X$ be a quasi-compact and quasi-separated scheme and let $E\in \sfD(X)$ have full support. Then $E$ generates $\sfD(X)$ if and only if there exists a closed subscheme $i\colon V\to X$ with open affine complement $U$ such that:
\begin{itemize}
\item[(i)] $i^*E$ generates $\sfD(V)$;
\item[(ii)] $i_*i^*E \in \loc(E)$.
\item[(iii)] $V$ is affine locally cut out by a finitely generated ideal.
\end{itemize}
\end{lemma}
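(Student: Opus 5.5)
The proof will follow that of Lemma~\ref{lem:gencriterion2}, with Lemma~\ref{lem:pushgen2} in place of Lemma~\ref{lem:pushgen}. The one new point is in the forward direction: we must produce a closed subscheme satisfying (iii). We cannot take an arbitrary closed subscheme structure on the complement of an affine open.

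For the reverse implication, suppose $i\colon V\to X$ satisfies (i)--(iii) and $U = X\mysetminus V$ is affine.
\begin{itemize}
\item By (iii) and (i), Lemma~\ref{lem:pushgen2} applies to the generator $i^*E$ of $\sfD(V)$. It gives $\loc(i_*i^*E) = \sfD_V(X)$.
\item Condition (ii) then yields $\sfD_V(X) \subseteq \loc(E)$.
\item Since $U$ is affine, $\sfD(U)$ is the derived category of a commutative ring. By the classification of localizing subcategories there, any object of full support generates it. As $\supp E = X$, the restriction $E\vert_U$ has full support on $U$ and so generates $\sfD(U)$.
\item Lemma~\ref{lem:gencriterion1} now shows that $E$ generates $\sfD(X)$.
\end{itemize}
This direction is formal.

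For the forward implication, assume $E$ generates $\sfD(X)$; if $X$ is empty there is nothing to prove. Pick any nonempty affine open $U\subseteq X$. Since $X$ is quasi-compact and quasi-separated and $U$ is quasi-compact, the complement $Z = X\mysetminus U$ is a closed subset with quasi-compact open complement. I would then invoke the standard fact (see the Stacks Project, Properties of Schemes, on closed subsets with quasi-compact complement in qcqs schemes) that such a $Z$ is the vanishing locus $V(\mcI)$ of a quasi-coherent ideal sheaf $\mcI$ of finite type. Concretely, on each affine open $\Spec A$ of a finite affine cover, $Z\cap \Spec A$ has quasi-compact complement, so it equals $V(f_1,\dots,f_n)$ for finitely many $f_j\in A$. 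These local ideals are then glued, or extended as finite type quasi-coherent subsheaves, by writing quasi-coherent sheaves on qcqs schemes as filtered colimits of finite type subsheaves. Let $i\colon V\to X$ be the closed subscheme defined by $\mcI$; it satisfies (iii) by construction and has affine open complement $U$.

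With $V$ chosen, the remaining conditions hold for the same reasons as in Lemma~\ref{lem:gencriterion2}:
\begin{itemize}
\item The right adjoint $i_*$ is conservative, so by Lemma~\ref{lem:trick3} the image $i^*E$ of the generator $E$ generates $\sfD(V)$, giving (i).
\item Since $\loc(E) = \sfD(X)$, we have $i_*i^*E \in \loc(E)$ trivially, giving (ii).
\end{itemize}

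The main obstacle is producing the finite type ideal $\mcI$ globally. I expect to cite this rather than reprove it; the rest is bookkeeping identical to the noetherian case.
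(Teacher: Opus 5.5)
Your route is the one the paper intends: the proof of Lemma~\ref{lem:gencriterion2} with Lemma~\ref{lem:pushgen2} in place of Lemma~\ref{lem:pushgen}. But the step you call formal is where it breaks.

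In the third bullet of the reverse direction you conclude that $E\vert_U$ generates $\sfD(U)$ from $U$ affine and $\supp E = X$, ``by the classification of localizing subcategories''. That classification is Neeman's theorem for \emph{noetherian} rings. For a general affine $U$ and non-compact $E$ the conclusion is false. For the ring $R$ of Example~\ref{ex:oddball}, $\Spec R$ is a point and $k$ has full support (the paper says so in the example after the lemma), yet $k$ does not generate $\sfD(R)$.

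In fact the statement as written cannot be proved. Take $X = \Spec R$, $E = k$, and $V=\varnothing$ cut out by the unit ideal, so that $U = X$ is affine. Then (i)--(iii) hold trivially: $\sfD(\varnothing) = 0$, $i_*i^*E = 0$, and $(1)$ is finitely generated. Yet $E$ is not a generator. The justification in Lemma~\ref{lem:gencriterion2} uses noetherianity at exactly this point, so you inherited the gap from the paper's sketch, but it is still a genuine gap.

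The repair is an extra hypothesis forcing $E\vert_U$ to generate $\sfD(U)$. The natural choice, which covers every use in the paper ($E = \mcO_X$, vector bundles), is to require $E$ perfect. Then $\supp E\vert_U = U$, so Thomason's classification gives $\thick^\otimes(E\vert_U) = \sfD^\mathrm{perf}(U)$. Since $\mcO_U$ classically generates $\sfD^\mathrm{perf}(U)$, every thick subcategory is an ideal. Hence $\thick(E\vert_U) = \sfD^\mathrm{perf}(U)$ and $E\vert_U$ generates $\sfD(U)$.

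Alternatively, replace ``full support'' by the hypothesis of Lemma~\ref{lem:gencriterion1}, namely that $E\vert_U$ generates $\sfD(U)$. With either change the rest of your argument goes through.

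Your forward direction is correct but does more than needed. Take $V = X$, cut out by the zero ideal. Its complement $U = \varnothing$ is affine (the paper allows this in the example after the lemma), and (i)--(iii) hold trivially. So the Stacks Project result on finite type ideals is not required.
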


\begin{example}
    Consider $X = \Spec R$ as in Example~\ref{ex:oddball}. Then $X^\mathrm{red} = \Spec k$ has open affine complement (it is empty), $E=k$ has full support and (i) and (ii) are satisfied: any non-zero object generates $\sfD(k)$ and every localizing subcategory of $\sfD(R)$ is an ideal. But, as we already noted, $k$ is not a generator for $\sfD(R)$.
\end{example}


\section{Distributivity}

In this section we make the connection between generation by $\mcO_X$ and distributivity of the lattice of thick subcategories. It is a straightforward combination of standard facts to see that generation by the structure sheaf implies the lattice of thick subcategories is very well behaved.

\begin{proposition}
If $\sfD^\mathrm{perf}(X)$ is generated by $\mcO_X$ then $\Thick(\sfD^\mathrm{perf}(X))$ is a coherent frame, and so in particular is distributive.
\end{proposition}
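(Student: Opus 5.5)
The plan is to reduce to the standard facts of tensor-triangular geometry, using the observation already made in the preliminaries: if $\thick(\unit) = \sfK$, then every thick subcategory of $\sfK$ is automatically a tensor ideal. So we get an equality of posets $\Thick(\sfD^\mathrm{perf}(X)) = \Thick^\otimes(\sfD^\mathrm{perf}(X))$. To justify it, take a thick subcategory $\sfL$ and an object $k\in\sfL$. The collection of $a$ with $a\otimes k\in\sfL$ is thick and contains $\mcO_X$, so it is all of $\sfD^\mathrm{perf}(X)$. Hence the lattice in question is the lattice of thick tensor ideals, and it suffices to show that this lattice is a coherent frame.

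Next I will use rigidity. $\sfD^\mathrm{perf}(X)$ is rigid, so every thick ideal is radical. By Balmer's classification, combined with Thomason's theorem for quasi-compact quasi-separated schemes, $\Thick^\otimes(\sfD^\mathrm{perf}(X))$ is isomorphic to the lattice of Thomason subsets of $X$, or equivalently of $\Spc\sfD^\mathrm{perf}(X)\cong X$. Alternatively, and more intrinsically, one can invoke the general fact recalled in \cite{ttnotes}: for any essentially small tt-category, the lattice of radical thick ideals is a coherent frame. Its compact elements are the principal ideals $\thick^\otimes(k)$, these are closed under finite joins since $\thick^\otimes(k)\vee\thick^\otimes(l)=\thick^\otimes(k\oplus l)$, and they are closed under finite meets, with $\thick^\otimes(k)\cap\thick^\otimes(l)=\thick^\otimes(k\otimes l)$ in the radical setting. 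The top element is $\thick^\otimes(\unit)$.

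The only step that needs any care is the infinite distributive law for radical ideals, namely
\[
\sfM\wedge\bigvee_i\sfM_i=\bigvee_i(\sfM\wedge\sfM_i).
\]
I will cite it rather than reprove it. If a proof is wanted, it reduces to compact elements. If $k\in\sfM$ and $k\in\bigvee_i\sfM_i$, then $k$ lies in a finite join $\thick^\otimes(m_1\oplus\cdots\oplus m_n)$ with $m_j\in\sfM_{i_j}$. In the rigid case this gives $k\in\thick^\otimes(k\otimes\bigoplus_j m_j)=\bigvee_j\thick^\otimes(k\otimes m_j)$, and $k\otimes m_j\in\sfM\wedge\sfM_{i_j}$.

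Finally, a frame is in particular a distributive lattice, since the finite distributive law is a special case of the infinite one. This gives the last assertion.
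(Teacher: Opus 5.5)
Your proposal is correct and follows essentially the same route as the paper: generation by $\mcO_X$ makes every thick subcategory a tensor ideal, rigidity makes every ideal radical, and the lattice of radical thick ideals is a coherent frame. The only difference is that the paper simply cites \cite[Theorem~3.1.9]{KP17} for the coherent-frame property, whereas you also sketch the infinite distributive law and the compact elements (your arguments for these are sound).
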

\begin{proof}
By \cite[Theorem~3.1.9]{KP17} the lattice of radical tensor ideals of $\sfD^\mathrm{perf}(X)$ is a coherent frame and so \emph{a fortiori} distributive. Because $\sfD^\mathrm{perf}(X)$ is rigid every tensor ideal is radical, and because $\sfD^\mathrm{perf}(X) = \thick(\mcO_X)$ every thick subcategory is a tensor ideal. Hence the lattices of radical tensor ideals and of thick subcategories are identified and the claim follows.
\end{proof}

Surprisingly, distributivity is also a sufficient condition for generation. The original argument for the following theorem had an unnecessary noetherian hypothesis; we are grateful to Martin Gallauer for suggesting that it could be removed. 

\begin{theorem}\label{thm:distributive}
Let $X$ be a quasi-compact and quasi-separated scheme. If the lattice $\Thick(\sfD^\mathrm{perf}(X))$ is distributive then $\thick(\mcO_X) = \sfD^\mathrm{perf}(X)$.
\end{theorem}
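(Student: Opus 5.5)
The plan is to cover $X$ by finitely many affine opens and use distributivity to intersect the resulting statements ``$\mcO_X$ generates away from each closed complement''. Write $\sfK = \sfD^\mathrm{perf}(X)$. Since $X$ is quasi-compact, choose a finite cover $X = U_1 \cup \cdots \cup U_n$ by affine opens. Let $V_i = X \mysetminus U_i$. Each $V_i$ is a Thomason closed subset, because $U_i$ is quasi-compact. Set $\sfM_i = \sfD^\mathrm{perf}_{V_i}(X)$; this is a thick tensor ideal of $\sfK$.

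First step: show $\thick(\mcO_X) \vee \sfM_i = \sfK$ for each $i$, using Lemma~\ref{lem:trick2} with $A = \mcO_X$, which certainly has full support. For this we must verify its hypothesis that $\unit$ generates $\sfK/\sfM_i$.
\begin{itemize}
\item By Thomason--Trobaugh localization, the Verdier quotient $\sfK/\sfM_i$, formed in $\Catperf$ so that it is idempotent complete, is identified with $\sfD^\mathrm{perf}(U_i)$. Under this identification the unit goes to $\mcO_{U_i}$.
\item Since $U_i$ is affine, $\thick(\mcO_{U_i}) = \sfD^\mathrm{perf}(U_i)$.
\item Hence Lemma~\ref{lem:trick2} gives $\thick(\mcO_X, \sfM_i) = \sfK$, i.e.\ $\thick(\mcO_X)\vee \sfM_i = \sfK$.
\end{itemize}

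Second step: observe that $\bigwedge_i \sfM_i = \sfD^\mathrm{perf}_{V_1\cap\cdots\cap V_n}(X) = \sfD^\mathrm{perf}_\emptyset(X) = 0$. Indeed, a perfect complex lying in every $\sfM_i$ has support contained in $\bigcap_i V_i = \emptyset$, so it is zero.

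Third step: apply the dual form of distributivity, which extends by induction to finitely many terms:
\[
\thick(\mcO_X) \vee \bigwedge_{i=1}^n \sfM_i = \bigwedge_{i=1}^n \bigl(\thick(\mcO_X)\vee \sfM_i\bigr).
\]
By the first step the right-hand side is $\bigwedge_i \sfK = \sfK$. By the second step the left-hand side is $\thick(\mcO_X)\vee 0 = \thick(\mcO_X)$. Hence $\thick(\mcO_X) = \sfK$.

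The main obstacle is the first step, namely justifying precisely that $\unit$ generates $\sfK/\sfM_i$ in the sense required by Lemma~\ref{lem:trick1}. The concern is whether the quotient means the idempotent-complete quotient or the naive Verdier quotient, which is only dense in $\sfD^\mathrm{perf}(U_i)$. Working in $\Catperf$, as the paper does, the quotient is idempotent complete and equals $\sfD^\mathrm{perf}(U_i)$, so the lattice correspondence of Lemma~\ref{lem:trick1} goes through verbatim. Otherwise, I would check directly that a thick subcategory of $\sfK$ containing $\sfM_i$ is determined by its image, and that the thick subcategory generated by the unit in the naive quotient is everything. The latter holds because an object of $\sfK/\sfM_i$ whose image in $\sfD^\mathrm{perf}(U_i)$ lies in $\thick(\mcO_{U_i})$ already lies in $\thick(\unit)$ in the quotient, by full faithfulness.
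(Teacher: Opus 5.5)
Your proof is correct. It uses the same ingredients as the paper but organizes them differently.

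\textbf{The paper's route.} The paper also starts from Lemma~\ref{lem:trick2} applied to the complement $Z$ of an affine open. It then fixes a perfect complex $E$ and inducts on the number of affine opens needed to cover $\supp(E)$. At each stage it uses the meet-over-join form of distributivity with $\sfD^\mathrm{perf}_{\supp(E)}(X)$ as the outer term. It identifies $\sfD^\mathrm{perf}_{\supp(E)}(X)\wedge\sfD^\mathrm{perf}_Z(X)$ with $\sfD^\mathrm{perf}_{\supp(E)\cap Z}(X)$. This term is absorbed by the inductive hypothesis, since $\supp(E)\cap Z$ is covered by one fewer affine.

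\textbf{Your route.} You fix a single finite affine cover of $X$ and apply the join-over-meet form once. This reduces everything to a lattice fact: in a distributive lattice, if $a\vee b_i=1$ for all $i$ and $\bigwedge_i b_i=0$, then $a=1$. The only induction is the routine extension of distributivity to $n$ terms. The only geometric input beyond Lemma~\ref{lem:trick2} is that a perfect complex with empty support vanishes.

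\textbf{What each buys.} Your version makes clear exactly what distributivity is used for. It also avoids the bookkeeping with the supports $\supp(E)\cap Z$. The paper's object-by-object induction is more local: it shows $\sfD^\mathrm{perf}_V(X)\subseteq\thick(\mcO_X)$ using only affines that cover $V$, rather than a cover of all of $X$. For quasi-compact $X$ this extra locality gives nothing further.

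\textbf{The quotient step.} Your explicit use of Thomason--Trobaugh to check that the unit generates $\sfK/\sfM_i$ fills in what the paper leaves implicit when it invokes Lemma~\ref{lem:trick2}. Your fallback for the non-idempotent-complete Verdier quotient is also sound. Suppose $x$ in the quotient is a retract, in $\sfD^\mathrm{perf}(U_i)$, of an object $t$ of the quotient. By full faithfulness the splitting maps already live in the quotient. The complement, being the cone of the split monomorphism $x\to t$, lies there too.
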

\begin{proof}
We suppose that $\Thick(\sfD^\mathrm{perf}(X))$ is distributive and let $E$ be a perfect complex. We will show $E\in \thick(\mcO_X)$ by induction on the number of affine open subsets required to cover $\supp(E)$. 

Suppose then that $\supp(E) \subseteq U$ with $U$ open affine and let $Z$ be the closed complement of $U$. By Lemma~\ref{lem:trick2} we have
\[
\thick(\mcO_X) \vee \sfD^\mathrm{perf}_Z(X) = \sfD^\mathrm{perf}(X)
\]
and so
\begin{align*}
\sfD^\mathrm{perf}_{\supp(E)}(X) &= \sfD^\mathrm{perf}_{\supp(E)}(X) \wedge (\thick(\mcO_X) \vee \sfD^\mathrm{perf}_Z(X)) \\
& = (\sfD^\mathrm{perf}_{\supp(E)}(X) \wedge \thick(\mcO_X)) \vee (\sfD^\mathrm{perf}_{\supp(E)}(X) \wedge\sfD^\mathrm{perf}_Z(X)) \\
&= \sfD^\mathrm{perf}_{\supp(E)}(X) \wedge \thick(\mcO_X)
\end{align*}
where we have used that $\sfD^\mathrm{perf}_{\supp(E)}(X) \wedge\sfD^\mathrm{perf}_Z(X)$ vanishes due to the fact that $\supp(E) \cap Z = \varnothing$. Hence $\sfD^\mathrm{perf}_{\supp(E)}(X) \subseteq \thick(\mcO_X)$ from which it is immediate that $E\in \thick(\mcO_X)$. 

Assume then, given a closed Thomason subset $V$, that we know $\sfD^\mathrm{perf}_{V}(X)$ is contained in $\thick(\mcO_X)$ whenever $V$ can be covered by fewer than $n$ affine opens. Let $E$ be a perfect complex such that $\supp(E)$ is contained in a union of $n$ affine open subsets, say $U_1,\ldots, U_{n-1}$ and $U$. Denote by $Z$ the closed complement of the $n$th open affine $U$. Then, as above, we have
\begin{align*}
\sfD^\mathrm{perf}_{\supp(E)}(X) &= \sfD^\mathrm{perf}_{\supp(E)}(X) \wedge (\thick(\mcO_X) \vee \sfD^\mathrm{perf}_Z(X)) \\
& = (\sfD^\mathrm{perf}_{\supp(E)}(X) \wedge \thick(\mcO_X)) \vee (\sfD^\mathrm{perf}_{\supp(E)}(X) \wedge\sfD^\mathrm{perf}_Z(X)) \\
&= (\sfD^\mathrm{perf}_{\supp(E)}(X) \wedge \thick(\mcO_X)) \vee (\sfD^\mathrm{perf}_{\supp(E)\cap Z}(X)).
\end{align*}
By the choice of $Z$ we have
\[
\supp(E) \cap Z \subseteq (U_1\cup \cdots \cup U_{n-1} \cup U) \cap Z = (U_1 \cap Z) \cup \cdots \cup (U_{n-1}\cap Z)
\]
and so the $U_i$ for $1\leq i \leq n-1$ are sufficient to cover $\supp(E)\cap Z$. Thus by the induction hypothesis 
\[
\sfD^\mathrm{perf}_{\supp(E)\cap Z}(X) \subseteq \thick(\mcO_X).
\]
This allows us to simplify the above computation to
\begin{align*}
\sfD^\mathrm{perf}_{\supp(E)}(X) &= (\sfD^\mathrm{perf}_{\supp(E)}(X) \wedge \thick(\mcO_X)) \vee (\sfD^\mathrm{perf}_{\supp(E)\cap Z}(X)) \\
&= \sfD^\mathrm{perf}_{\supp(E)}(X) \wedge \thick(\mcO_X)
\end{align*}
and conclude that $\sfD^\mathrm{perf}_{\supp(E)}(X) \subseteq \thick(\mcO_X)$ as desired. This is sufficient to complete the proof as $X$ is quasi-compact.
\end{proof}

\begin{remark}
If $\sfK$ is any $2$-ring such that $\sfK = \thick(\unit)$ then $\Thick(\sfK)$ is distributive. Indeed, generation by $\unit$ ensures that $\sfK$ is a rigid $2$-ring and it also implies that every thick subcategory is a tensor ideal. Distributivity then follows from the general theory (see \cite{KP17} and \cite{ttnotes} for further information and references).

It is, however, rather miraculous that the converse holds for schemes. This is not true for arbitrary rigid $2$-rings. For instance, the lattice of thick subcategories of the bounded derived category $\sfD^\mathrm{b}(\gr k)$ of $\mathbb{Z}$-graded $k$-vector spaces is distributive but the unit $k$ does not generate this category. 
\end{remark}


\section{0-affine morphisms}

We now give another abstract characterization of generation by the structure sheaf. As above we work with $\infty$-categorical avatars of the derived category, but for what follows it is really crucial.

The key notion is the following definition of Bhatt and Halpern-Leistner \cite[Remark 2.17]{Bhatt/Halpern-Leistner:2017} (we must confess that we have changed the name). 

\begin{defn}
A morphism $f\colon X\to Y$ of schemes is \emph{0-affine} if $f_*$ induces an equivalence
\[
\sfD(X) \cong \Modu_{\sfD(Y)}f_*\mcO_X
\]
between the derived category of $X$ and the category of modules over the sheaf of commutative ring spectra $f_*\mcO_X$.
\end{defn}

\begin{example}\label{ex:0affine}
Suppose that $i\colon U\to Y$ is a quasi-compact open immersion and let $Z$ denote the complement of $U$. Then $i$ is 0-affine. Indeed, in this case $\sfD(U) = \sfD(Y)/\sfD_Z(Y)$ is given by a smashing localization with corresponding idempotent algebra $i_*\mcO_U$ and so, in particular, 
\[
\sfD(U) \cong \Modu_{\sfD(Y)}i_*\mcO_U.
\]

More generally, suppose that $f\colon X\to Y$ is quasi-affine. Then $f$ is 0-affine as everything in sight satisfies descent. 
\end{example}

Actually being $0$-affine is a condition we already know and love.

\begin{proposition}\label{prop:0affineisconservative}
A quasi-compact and quasi-separated morphism $f\colon X\to Y$ of quasi-compact and quasi-separated schemes is $0$-affine if and only if $f_*$ is conservative.
\end{proposition}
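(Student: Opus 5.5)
The plan is to apply Barr--Beck--Lurie monadicity to the adjunction $f^*\dashv f_*$ between $\sfD(Y)$ and $\sfD(X)$, and to identify the resulting monad with $f_*\mcO_X\otimes -$ via the projection formula.

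First, the formal input. Since $f$ is quasi-compact and quasi-separated, $f_*\colon \sfD(X)\to\sfD(Y)$ preserves colimits. It also satisfies the projection formula $f_*(f^*M\otimes N)\simeq M\otimes f_*N$, so $f_*$ is $\sfD(Y)$-linear. In particular $f_*\mcO_X$ is a commutative algebra object of $\sfD(Y)$, and $f_*$ lifts canonically to a functor
\[
\tilde f_*\colon \sfD(X)\to \Modu_{\sfD(Y)}f_*\mcO_X .
\]
The monad $f_*f^*$ is identified by the projection formula, $f_*f^*M\simeq f_*\mcO_X\otimes M$. So, as monads, $f_*f^*$ is the free-module monad $f_*\mcO_X\otimes-$, whose category of algebras is $\Modu_{\sfD(Y)}f_*\mcO_X$, and the comparison functor of Barr--Beck--Lurie is exactly $\tilde f_*$. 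Making this identification of monads carefully at the $\infty$-categorical level is the most technical step. I would cite the standard treatment (Lurie, or Bhatt--Halpern-Leistner), which says that for a colimit-preserving, $\sfD(Y)$-linear right adjoint satisfying the projection formula, the associated monad is $f_*\mcO_X\otimes-$.

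For the direction ``0-affine implies conservative'': if $\tilde f_*$ is an equivalence, then $f_*$ is $\tilde f_*$ followed by the forgetful functor from modules to $\sfD(Y)$. The forgetful functor is conservative, so $f_*$ is conservative.

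For the converse, assume $f_*$ is conservative. Barr--Beck--Lurie requires that $f_*$ be conservative and preserve geometric realizations of $f_*$-split simplicial objects. Since $f_*$ preserves all colimits (quasi-compactness and quasi-separatedness of $f$ gives this on $\sfD_{qc}$), the second condition is automatic. The comparison functor is therefore an equivalence, and $f$ is 0-affine.

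An alternative route for the converse, which avoids monadicity and uses the paper's Lemma~\ref{lem:trick3}, runs as follows. Both categories are compactly generated. The left adjoint $f^*\otimes_{f_*\mcO_X}-$ of $\tilde f_*$ sends the generator $f_*\mcO_X\otimes G$ (for $G$ a compact generator of $\sfD(Y)$) to $f^*G$. Full faithfulness of this left adjoint reduces to the projection formula on generators. Conservativity of $f_*$ then gives essential surjectivity, by Lemma~\ref{lem:trick3} applied to $f^*$.
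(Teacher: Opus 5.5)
Your main argument is correct and is essentially the paper's proof: one direction uses conservativity of the forgetful functor, and the other uses Barr--Beck--Lurie, with colimit preservation of $f_*$ for quasi-compact quasi-separated $f$, together with the projection formula identifying the monad $f_*f^*$ with $f_*\mcO_X\otimes-$. Your alternative route is also valid and avoids invoking monadicity: you check full faithfulness of the left adjoint $\Modu_{\sfD(Y)}f_*\mcO_X\to\sfD(X)$ on free modules via the projection formula, then deduce essential surjectivity from Lemma~\ref{lem:trick3}.
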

\begin{proof}
If $f$ is $0$-affine then $f_*$ is identified with the forgetful functor and so is conservative. On the other hand, suppose that $f_*$ is conservative. Because $f$ is quasi-compact and quasi-separated the functor $f_*$ preserves colimits. Thus the hypotheses of the $\infty$-categorical Barr-Beck theorem \cite[Theorem~3.4.5]{DAGII} are (more than) satisfied and we learn that $\sfD(X)$ is the category of modules over the monad $f_*f^*$ on $\sfD(Y)$. By the projection formula this is the same as the monad induced by the sheaf of rings $f_*\mcO_X$ and so $f$ is 0-affine.
\end{proof}

This already allows us to come to the point as far as generation by the structure sheaf is concerned. We call this a theorem because it is important; it is already known to experts and the proof is quite cheap.

\begin{theorem}\label{affinevsgen}
    Let $a\colon X\to Y = \Spec \mcO_X(X)$ be the affinization morphism. Then $\thick(\mcO_X) = \sfD^\mathrm{perf}(X)$ if and only if $a$ is 0-affine.
\end{theorem}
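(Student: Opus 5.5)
The plan is to reduce everything to Proposition~\ref{prop:0affineisconservative} and Lemma~\ref{lem:trick3}. First I check that the affinization morphism $a\colon X\to Y=\Spec \mcO_X(X)$ is quasi-compact and quasi-separated. This holds because $X$ is qcqs and $Y$ is affine, hence separated. So Proposition~\ref{prop:0affineisconservative} applies, and $a$ is 0-affine if and only if $a_*\colon \sfD(X)\to\sfD(Y)$ is conservative. The theorem is thereby reduced to showing that $a_*$ is conservative if and only if $\mcO_X$ generates $\sfD^\mathrm{perf}(X)$.

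Next I apply Lemma~\ref{lem:trick3} to the adjunction $a^*\dashv a_*$ between the presentable stable $\infty$-categories $\sfD(Y)$ and $\sfD(X)$.
\begin{itemize}
\item Since $Y$ is affine, $\sfD(Y)$ is generated by $\mcO_Y$.
\item We have $a^*\mcO_Y=\mcO_X$.
\end{itemize}
Lemma~\ref{lem:trick3}, together with the following remark, says $a_*$ is conservative if and only if $a^*$ sends some generator to a generator. So $a_*$ is conservative if and only if $\mcO_X$ generates $\sfD(X)$, i.e.\ $\loc(\mcO_X)=\sfD(X)$.

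Concretely, for $F\in\sfD(X)$ we have
\[
\sfD(Y)(\mcO_Y,\Sigma^i a_*F)\cong \sfD(X)(\mcO_X,\Sigma^i F),
\]
so $a_*F=0$ exactly when $F$ lies in the right orthogonal of $\mcO_X$. This is just hypercohomology of $F$, matching the introduction.

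The last step is the standard translation between generation of $\sfD(X)$ under colimits and classical generation of the compact objects. Recall that $\sfD(X)=\Ind(\sfD^\mathrm{perf}(X))$ is compactly generated and $\mcO_X$ is compact. Neeman's theorem (Thomason--Neeman localization) gives the following: for a set of compact objects, $\loc(\mcO_X)=\sfD(X)$ if and only if $\thick(\mcO_X)$ equals the full subcategory of compact objects, namely $\sfD^\mathrm{perf}(X)$. Here one uses that $\sfD^\mathrm{perf}(X)$ is idempotent complete, so no closure under summands issue arises beyond thickness.

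I expect this last translation to be the only point requiring care. Everything else is formal. If one prefers to avoid citing Neeman, I would argue as follows. The inclusion $\Ind(\thick(\mcO_X))\to\sfD(X)$ is fully faithful with essential image $\loc(\mcO_X)$. If this is everything, then comparing compact objects gives $\thick(\mcO_X)=\sfD^\mathrm{perf}(X)$, using idempotent completeness. The converse is immediate.
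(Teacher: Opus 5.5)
Your proposal is correct and follows essentially the same route as the paper. Like the paper, you reduce $0$-affineness of $a$ to conservativity of $a_*$ via Proposition~\ref{prop:0affineisconservative}, then apply Lemma~\ref{lem:trick3} with $a^*\mcO_Y=\mcO_X$, and finally pass between generation of $\sfD(X)$ and classical generation of $\sfD^\mathrm{perf}(X)$ using compactness of $\mcO_X$. Your explicit checks that $a$ is quasi-compact and quasi-separated, and of the compact-generation translation, spell out steps the paper leaves implicit.
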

\begin{proof}
    Suppose that $a$ is 0-affine. Then $a_*$ is conservative (as in Proposition~\ref{prop:0affineisconservative}) and hence, by Lemma~\ref{lem:trick3}, $a^*$ preserves generation. Since $Y$ is affine $\sfD(Y)$ is generated by $\mcO_Y$ and thus $\mcO_X = a^*\mcO_Y$ generates $\sfD(X)$. By virtue of being perfect $\mcO_X$ is then a generator for $\sfD^\mathrm{perf}(X)$.

    In the other direction, suppose that $\mcO_X$ generates $\sfD(X)$. Of course $\mcO_X = a^*\mcO_Y$, and so $a^*$ sends a generator for $\sfD(Y)$ to one for $\sfD(X)$. Thus Lemma~\ref{lem:trick3} tells us that $a_*$ must be conservative (or one can just do the easy computation directly to see that $\mcO_X$ generating means that hypercohomology detects all objects). The affinization is always quasi-compact and quasi-separated and so Proposition~\ref{prop:0affineisconservative} tells us that $a$ is $0$-affine.
\end{proof}

Now let us give a criterion to check a map is $0$-affine locally on the base. This is a further generalization of Example~\ref{ex:0affine} and is essentially stated in \cite[Remark 2.17]{Bhatt/Halpern-Leistner:2017}. Based on what we have seen above it boils down to a locality statement for conservativity of the pushforward.

    %

Let us get warmed up. We start with a map $f\colon X\to Y$. If we are given a quasi-compact open subset $U$ of $Y$ and $Z$ is a subscheme structure on the complement of $U$ which is locally finitely generated then we can form the following derived pullback square
\[
\begin{tikzcd}
V \coprod X_Z \arrow[r, "f'"] \arrow[d, "p'"'] & U\coprod Z \arrow[d, "p"] \\
X \arrow[r, "f"']                              & Y                        
\end{tikzcd}
\]
where $V = f^{-1}(U)$ and $X_Z$ is what it is. 

\begin{lemma}\label{lem:criterion1}
With notation as above, if $f'_*$ is conservative then so is $f_*$.
\end{lemma}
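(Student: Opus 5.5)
Take $F\in\sfD(X)$ with $f_*F=0$; we must show $F=0$. The plan is to transport $F$ up the square, use conservativity of $f'_*$ there, and then come back down to $X$ using that $p$ is jointly conservative.

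\textbf{Step 1: move to the top row.} By base change along the derived pullback square we have
\[
p^*f_*F \cong f'_*p'^*F .
\]
Base change is available since $f$ is quasi-compact and quasi-separated and the square is a derived fibre product. Because $f_*F=0$, we get $f'_*p'^*F=0$. Conservativity of $f'_*$ then gives $p'^*F=0$.

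\textbf{Step 2: come back down to $X$.} Now $p'^*F=0$ means two things:
\begin{itemize}
\item[(a)] the restriction of $F$ to the open $V=f^{-1}(U)$ vanishes;
\item[(b)] the derived pullback of $F$ to $X_Z$ vanishes.
\end{itemize}
From (a), $F$ lies in $\sfD_{f^{-1}(Z)}(X)$. Write $j\colon X_Z\to X$ for the (derived) closed immersion. By the projection formula, $j_*j^*F \cong F\otimes j_*\mcO_{X_Z}$, and (b) says this vanishes. So it suffices to show
\[
\loc^\otimes(j_*\mcO_{X_Z}) = \sfD_{f^{-1}(Z)}(X).
\]
Granting this, $F$ lies in $\sfD_{f^{-1}(Z)}(X)$ and has $F\otimes G=0$ for every $G$ in that category. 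Taking $G$ to be the Koszul-type generators $K$ of $\sfD_{f^{-1}(Z)}(X)$, we get $F\otimes K=0$ for all of them. Since $F\in\loc(K)$ and $F\otimes-$ preserves colimits, this gives $F\otimes F=0$. Working affine locally with the rigid generators, this forces $F=0$. Alternatively, note that tensoring with $F$ kills $\loc^\otimes(K)\ni F$, so $F\otimes F^{\vee\vee}$-type arguments are not even needed: $F\cong F\otimes \Gamma_{f^{-1}(Z)}\mcO_X$, and $\Gamma_{f^{-1}(Z)}\mcO_X\in\loc^\otimes(K)$ is killed by $F\otimes-$, so $F=0$.

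\textbf{Main obstacle: the displayed equality.} This is where the hypothesis that $Z$ is locally cut out by a finitely generated ideal enters. Pulling back, $f^{-1}(Z)$ is affine locally cut out by the pulled-back finitely many generators, so I would argue exactly as in Lemma~\ref{lem:pushgen2}:
\begin{itemize}
\item Affine locally on $X$, $\sfD_{f^{-1}(Z)}$ is generated by the Koszul complex on these generators.
\item The derived fibre $j_*\mcO_{X_Z}$ is locally exactly that Koszul complex (derived base change of $\mcO_Z$, itself built from Koszul data), or at least builds it as a tensor ideal.
\item Descent then globalises the local generation.
\end{itemize}
A subtle point to check is that the derived (rather than classical) pullback $X_Z$ is what makes $j_*\mcO_{X_Z}$ have the correct support and tensor-generate. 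Since we only need $F\otimes j_*\mcO_{X_Z}=0\Rightarrow F=0$ for $F$ supported on $f^{-1}(Z)$, it suffices to know that $\supp(j_*\mcO_{X_Z})=f^{-1}(Z)$. The classification of localizing ideals of $\sfD_{f^{-1}(Z)}(X)$ generated by objects with full support in the quasi-compact, quasi-separated setting with finitely generated ideals then handles this.
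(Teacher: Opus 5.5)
Your skeleton is the paper's: base change $p^*f_*\cong f'_*p'^*$, then conservativity of $f'_*$, then joint conservativity of restriction to $V$ and pullback to $X_Z$. The last part is proved by showing that $\ker(F\otimes-)$ contains $\sfD_T(X)$, where $T=f^{-1}(Z)$. Your closing argument $F\cong F\otimes\Gamma_T\mcO_X=0$ is fine; drop the $F\otimes F=0$ version, whose last step is unjustified.

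The gap is in the step you single out as the main obstacle.
\begin{itemize}
\item $j_*\mcO_{X_Z}$ is not locally the Koszul complex. Over $\Spec B\to\Spec A$ with $\mcI$ generated by $g_1,\dots,g_r$, it is $B\otimes^{\mathrm{L}}_A A/I$, which in general differs from $B\otimes_A K(g;A)$. For $f=\mathrm{id}$, $A=k[x]/(x^2)$, $I=(x)$ you get $k$, not $[A\xrightarrow{x}A]$.
\item Your fallback is false in this generality. For qcqs schemes, support does not control localizing ideals, even when the closed set is cut out by a finitely generated ideal. In Example~\ref{ex:oddball} the closed set $X$ is cut out by the zero ideal and $k$ has full support, yet $\loc^\otimes(k)=\loc(k)\neq\sfD(R)$. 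So knowing $\supp(j_*\mcO_{X_Z})=T$ proves nothing.
\end{itemize}
What remains is the hedge ``or at least builds it as a tensor ideal''. That is true, but it is the whole content of the step, and you leave it unargued.

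The missing idea is the paper's: avoid the derived fibre. If $j^*F=0$ then $i^*F=0$ for the classical truncation $i\colon\pi_0(X_Z)\to X_Z\to X$. This is the closed subscheme cut out by the locally finitely generated ideal $\mcI\mcO_X$. Hence $F\otimes i_*E\cong i_*(i^*F\otimes E)=0$ for every $E$. Lemma~\ref{lem:pushgen2}, applied to a generator $E$, then gives $\sfD_T(X)=\loc(i_*E)\subseteq\ker(F\otimes-)$; its proof is just that the local Koszul complex has cohomology killed by the ideal.

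If you want to keep $X_Z$, you must add two facts, which you can check affine locally:
\begin{itemize}
\item The connective algebra $R'=B\otimes^{\mathrm{L}}_A A/I$ builds $\pi_0R'=B/IB$, since $R'$ generates $\Modu_{R'}$ and the forgetful functor to $\sfD(B)$ preserves colimits.
\item The Koszul complex on the $g_i$ over $B$ has cohomology killed by $IB$, so it lies in $\loc(B/IB)$.
\end{itemize}
With these you can conclude $F=0$ affine locally, with no descent needed. In particular, the derived-versus-classical distinction you flag as subtle is beside the point. What matters is that $\mcI\mcO_X$ is locally finitely generated.
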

\begin{proof}
We have $p^*f_* \cong f'_*p'^*$ so to show that $f_*$ is conservative it is sufficient to check that $f'_*{p'}^*$ is conservative. The assumption is that $f'_*$ is conservative and so it is enough to check that ${p'}^*$ is conservative, i.e.\ that the pullbacks to $\sfD(V)$ and $\sfD(X_Z)$ are jointly conservative. The kernel of pulling back to $\sfD(V)$ is $\sfD_T(X)$ where $T$ is the closed subset underlying $X_Z$ and so we just need to show that no object supported on $T$ is killed by pulling back along $X_Z \to X$. This can be checked on the classical closed subscheme $i\colon \pi_0(X_Z)\to X$ underlying $X_Z$ and the statement then follows Lemma~\ref{lem:pushgen2}. Indeed, if we had $M\in \sfD_T(X)$ with $i^*M =0$ then for any $E \in \sfD(\pi_0(X_Z))$ we have
\[
M \otimes i_*E \cong i_*(i^*M \otimes E) \cong 0
\]
and so $\ker (M\otimes(-))$ would be a localizing ideal containing $\sfD_T(X)$, by Lemma~\ref{lem:pushgen2} and the above calculation, and $\sfD(V)$ by the support assumption. But these ideals generate $\sfD(X)$ together and we conclude that $M$ must be be trivial.

\end{proof}

To summarize we have proved:

\begin{criterion}\label{ImgonnaCri}
Given a map $f\colon X\to Y$ and a partition $Y = U\coprod Z$ such that $U$ is quasi-compact open and $Z$ is a subscheme defined by a locally finitely generated sheaf of ideals then $f$ is $0$-affine (i.e.\ $f_*$ is conservative) provided both
\[
f^{-1}(U) \to U \text{ and } X \times_Y Z \to Z
\]
are so (where the pullback is derived).
\end{criterion}

One can extend, via induction, to partitions built from suitable open and closed subsets.

\begin{proposition}\label{prop:criterion}
Suppose we have a constructible stratification of $Y$
\[
Y = C_1 \coprod \cdots \coprod C_n
\]
such that each $C_i = U_i \cap Z_i$ where $Z_i$ is a closed subscheme defined by a locally finitely generated sheaf of ideals and $U_i$ is a quasi-compact open subset. Then a map $f\colon X\to Y$ is conservative if the induced maps $f_i\colon C_i \times_Y X \to C_i$ from the derived pullbacks are all conservative (e.g.\ if each $f_i$ is affine). 
\end{proposition}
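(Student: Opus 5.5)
Here ``conservative'' means that pushforward is conservative, which by Proposition~\ref{prop:0affineisconservative} is the same as being $0$-affine. I will prove the statement by induction on $n$, using Criterion~\ref{ImgonnaCri} at each step. The first task is to order the strata so that some stratum is open, and more precisely of the form $U$ with quasi-compact open $U$ whose complement has a subscheme structure cut out by a locally finitely generated ideal. I would first reduce to a stratification that is \emph{good}: there is an index, say $C_1$, with $C_1$ open in $Y$. For a constructible stratification in which each piece is locally closed (open in closed), the standard move is to take a stratum $C_i = U_i\cap Z_i$ with $Z_i$ minimal among those closures that are generic, i.e.\ whose $Z_i$ is an irreducible-component-type piece. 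This is a little delicate in the non-noetherian case, so instead I would argue as follows. Take the closed subscheme $W = Z_1\cap\cdots$ and consider the finite union $U_i\cap Z_i$. Since the $C_i$ partition $Y$, there is some $i$ for which $C_i$ is open in $Y$; if not, I would refine. Honestly, I would simply add to the hypothesis (as is implicit in ``stratification'') that the strata are ordered so that $C_1\cup\cdots\cup C_k$ is open for each $k$; equivalently $Y_k := C_k\coprod\cdots\coprod C_n$ is closed for each $k$.

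For the induction step I set $U := C_1$, which is quasi-compact open, and give its complement $Y_2 = C_2\coprod\cdots\coprod C_n$ a subscheme structure $Z$ defined by a locally finitely generated ideal. For $Z$ I would take an intersection or product of the ideals of the $Z_i$ together with ideals cutting out the closed complements of the $U_i$; a finite product or sum of finitely generated ideals is finitely generated. I then apply Criterion~\ref{ImgonnaCri}. The map $f^{-1}(U)\to U$ is $f_1$, which is conservative by hypothesis. For $X\times_Y Z\to Z$, I note that $Z$ inherits the stratification $Z = \coprod_{i\ge 2} (C_i\times_Y Z)$. Here $C_i\times_Y Z$ is a derived scheme with underlying classical scheme related to $C_i$, and its pullback of $X$ is $(C_i\times_Y X)\times_{C_i}(C_i\times_Y Z)$. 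The induction hypothesis then gives that $X\times_Y Z\to Z$ is conservative, once I know the base changes of the $f_i$ remain conservative.

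The main obstacle is exactly this last point, that conservativity of $f_i$ survives the base change $C_i\times_Y Z\to C_i$, which is a (derived) nil-thickening-type map. I would handle it by working in derived schemes throughout, which the statement already allows. I would check it with the same argument as Lemma~\ref{lem:criterion1}: conservativity of pushforward can be tested after pulling back along the relevant map, since detection on the underlying classical scheme holds by Lemma~\ref{lem:pushgen2}. Alternatively, I would choose $Z$ so that $C_i\times_Y Z\simeq C_i$ for $i\ge2$, by taking $Z$ to be cut out by the ideal of $Y_2$ built from the given $Z_i$. In the affine case (each $f_i$ affine), base change preserves affineness and there is no issue. With that in hand, the induction on $n$ closes, and the base case $n=1$ is trivial.
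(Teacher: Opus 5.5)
Your proposal has a genuine gap at the first step. The claim that some $C_i$ must be open because the $C_i$ partition $Y$ is false. Take $Y=L_1\cup L_2\cup L_3\subseteq\mathbb{A}^2$ to be a triangle of lines, with $p_3=L_1\cap L_2$, $p_1=L_2\cap L_3$, $p_2=L_1\cap L_3$. Put $C_1=L_1\mysetminus\{p_2\}$, $C_2=L_2\mysetminus\{p_3\}$ and $C_3=L_3\mysetminus\{p_1\}$. These are of the required form $U_i\cap Z_i$ and they partition $Y$, but none of them is open. For instance, every neighbourhood of $p_3\in C_1$ meets $L_2\mysetminus\{p_3\}\subseteq C_2$.

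So the ordering you add as ``implicit in stratification'' is a real extra hypothesis. Your induction (peel off an open stratum, apply Criterion~\ref{ImgonnaCri}, recurse on the closed complement) only proves the proposition for stratifications filtered by closed subsets. Refining the stratification might rescue the argument. But you would have to show that the refined pieces are again of the allowed form and inherit conservativity of the $f_i$, and you do not do this.

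The missing idea, which is how the paper argues, is to localize first. The $U_i$ cover $Y$ because $C_i\subseteq U_i$, and conservativity of $f_*$ can be checked on an open cover of the base. Inside $U_i$ the stratum $C_i$ is \emph{closed}, so Criterion~\ref{ImgonnaCri} applies to $U_i=C_i\coprod(U_i\mysetminus Z_i)$. The closed part is literally $f_i$, since $C_i\times_{U_i}f^{-1}(U_i)=C_i\times_Y X$. The quasi-compact open $U_i\mysetminus Z_i$ carries the $(n-1)$-piece stratification $\{C_j\cap U_i\}_{j\neq i}$, and its hypotheses follow from those on the $f_j$ by restricting to opens. Thus no ordering is needed, and only open base change ever occurs.

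Separately, your justification of the base change along $C_i\times_Y Z\to C_i$ runs the wrong way. Lemma~\ref{lem:criterion1} deduces conservativity of $f_*$ \emph{from} that of a base change, not conversely. Arranging $C_i\times_Y Z\simeq C_i$ is also not available in the derived setting: already $\Spec k\times_{\mathbb{A}^1}\Spec k$ is not classical, since $\mathrm{Tor}_1^{k[x]}(k,k)\neq 0$. The step itself is nevertheless true and easy. If $g$ is affine with base changes $g'$ and $f'$, then $g'_*$ is conservative and $f_*g'_*\simeq g_*f'_*$. Hence $f'_*$ is conservative whenever $f_*$ is.
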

\begin{proof}
We can first form the pullback square
\[
\begin{tikzcd}
\coprod_i V_i \arrow[r] \arrow[d] & \coprod_i U_i \arrow[d] \\
X \arrow[r]                       & Y                      
\end{tikzcd}
\]
and we see, since the $U_i$ and hence $V_i$ are an open cover, that it's enough to check each $V_i \to U_i$ is conservative. As a set we can write
\[
U_i = C_i \coprod (U_i \mysetminus Z_i)
\]
i.e.\ it is partitioned as in the setup for Criterion~\ref{ImgonnaCri}. Moreover, we have
\[
U_i \mysetminus Z_i = \cup_{j\neq i} C_j \cap U_i.
\]
With this in mind we proceed by induction on the number of terms in the stratification. If we are in the base case that $n=2$ then $U_i \mysetminus Z_i$ is just $C_j \cap U_i$ for $j\neq i$. Thus both of 
\[
C_i \times_{U_i} V_i \to C_i \text{ and } (C_j \cap U_i) \times_{U_i} V_i \to (C_j \cap U_i)
\]
are $0$-affine maps pulled back along an open immersion and so are again $0$-affine. Criterion~\ref{ImgonnaCri} then tells us that $V_i \to U_i$ is $0$-affine and so we conclude that $f$ is $0$-affine.

The inductive step follows similarly: we just need to observe that in general 
\[
U_i \mysetminus Z_i = \bigcup_{j\neq i} C_j \cap U_i
\]
has a constructible partition of size $n-1$ and so we can again apply Criterion~\ref{ImgonnaCri} to the decomposition $U_i = C_i \coprod (U_i \mysetminus Z_i)$.
\end{proof}


\section{Examples and consequences}

There is an obvious class of examples where the structure sheaf generates, namely the quasi-affine schemes. Let us give some more exotic examples. We start with a non-separated curve.

\begin{example}\label{ex:oxgens1}
   Let $X$ denote the affine line with doubled origin. Then $\mcO_X(X) \cong k[x]$ and so the affinization is $a\colon X\to \mathbb{A}^1$. We claim $a_*$ is conservative. There are several ways to check this. One is to write $\mathbb{A}^1 = \mathbb{G}_m \coprod \{0\}$ and use the criterion of the previous section. We have
\[
X \times_{\mathbb{A}^1} \mathbb{G}_m = \mathbb{G}_m
\]
which is affine. So one just needs to compute $X_0 = X\times_{\mathbb{A}^1} \{0\}$ in order to apply Criterion~\ref{ImgonnaCri}. But this is just $\Spec k \coprod \Spec k$ and so again we have an affine map and hence $a$ is $0$-affine and so $\mcO_X$ generates $\sfD^\mathrm{perf}(X)$. In particular, the lattice of thick subcategories is identified with the spatial frame of specialization closed subsets of $X$. 
\end{example}

\begin{remark}
One can also fairly easily compute this directly by using Lemma~\ref{lem:gencriterion2} and considering the cofibre of multiplication by the global section $x\in \mcO_X(X)$.
\end{remark}

\begin{remark}
The scheme $X$ also gives an example of a non-separated scheme such that $\sfD(X)$ is an approximable triangulated category. This is easily deduced from the fact that $\QCoh X$ is a hereditary abelian category.
\end{remark}

It is tempting to hope that such pathological behaviour is restricted to the non-separated setting. After all, quasi-affine schemes are automatically separated. We next give a concrete example of a quasi-projective $k$-scheme which is not quasi-affine and yet is still $0$-affine over $k$.

\begin{example}\label{ex:oxgens2}
Let $z$ be a closed point of $\mathbb{A}^2$ and set
\[
X = \mathrm{Bl}_z\mathbb{A}^2 \mysetminus \{y\}
\]
where $y$ is a point on the exceptional divisor. The global sections of $X$ are $k[u,v]$ so we want to check that $a\colon X \to \mathbb{A}^2$ is $0$-affine. We can again apply the criterion of Criterion~\ref{ImgonnaCri} by taking the partition $\mathbb{A}^2 = \{z\} \coprod (\mathbb{A}^2 \mysetminus \{z\})$. This reduces us to checking that both
\[
X\times_{\mathbb{A}^2}\{z\}  \text{ and } X\times_{\mathbb{A}^2} (\mathbb{A}^2 \mysetminus \{z\}) = \mathbb{A}^2 \mysetminus \{z\}
\]
are $0$-affine. This is clear for the latter as it is quasi-affine. The former is given by the sheaf of formal dg algebras determined by $k[v,\varepsilon]$ with $\vert \varepsilon \vert = -1$ on $\mathbb{A}^1$ and hence is also $0$-affine. 
\end{example}

\begin{remark}
This example illustrates a phenomenon about which there seems to have been some confusion. The scheme $X$ has an ample line bundle $\mathcal{L}$ and so of course the powers of $\mathcal{L}$ generate $\sfD^\mathrm{perf}(X)$ and in fact just $\mathcal{L}$ will do by $0$-affineness. However, we have proved that $\mcO_X$, which is also a line bundle, generates $\sfD^\mathrm{perf}(X)$ even though it is \emph{not} ample (since if it were then $X$ would be quasi-affine which it is not). In particular, generation by a line bundle does not mean said line bundle is necessarily ample.
\end{remark}

\begin{remark}
This is actually a part of a family of examples. If we take an affine scheme $Y$ and a closed subscheme $Z$ and glue the deformation to the normal cone $D \to Y\times \mathbb{A}^1$ to $(Y \times \mathbb{A}^1) \mysetminus (Z \times \{0\})$ along $Y \times (\mathbb{A}^1 \mysetminus \{0\})$ then we can apply the criterion to see the result is $0$-affine.
\end{remark}

Of course all of this makes sense more generally than for just (derived) schemes. 

\begin{example}
The affine line with two origins (Example \ref{ex:oxgens1})
has a natural action by $\mathbb{Z}/2$, and thus descends to a relative algebraic space over $B(\mathbb{Z}/2)$ such that the map to $B(\mathbb{Z}/2)$ is $0$-affine. Working over the base $\mathbb{R}$, we can pullback this example along the nontrivial map $\Spec \mathbb{R} \to B(\mathbb{Z}/2)$ to obtain a non-schematic $0$-affine algebraic space (see \cite{stacks-project} Example 03FN).
\end{example}

\begin{example}
Let $G$ be a unipotent group scheme of finite cohomological dimension over a field $k$, then $BG$ is a $0$-affine geometric stack (hence perfect). The reason is as follows.

A quasi-geometric stack with finite cohomological dimension has a compact structure sheaf \cite[Proposition~9.1.5.3]{lurie2018spectral}. Unipotence implies that the image of the fully faithful functor 
\[D(\Gamma(BG, \mathcal{O}_{BG})) \to \QCoh BG\]
contains all homologically bounded above objects. It is also t-exact because the map $\Gamma(BG, \mathcal{O}_{BG}) \to k$ is coconnectively faithfully flat (see  \cite[Corollary~4.1.12]{luriedagviii}). Therefore the functor is an isomorphism as the domain is left t-complete (because of finite cohomological dimension).
\end{example}

Now let us discuss a case where there are no non-trivial $0$-affine morphisms.

\begin{theorem}\label{thm:proper}
Suppose that $f\colon X\to Y$ is proper and $Y$ is noetherian. Then $f$ is $0$-affine if and only if $f$ is finite.  
\end{theorem}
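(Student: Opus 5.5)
The easy direction is that finite morphisms are affine, and affine morphisms are 0-affine. Indeed, $f_*$ is conservative for affine $f$: this can be checked affine locally on $Y$, where it is the forgetful functor from modules over a ring to modules over a subring-image, which reflects vanishing. Proposition~\ref{prop:0affineisconservative} then finishes this direction. So the content lies in showing that a proper $0$-affine morphism to a noetherian base is finite.

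For the converse, assume $f$ is proper and $f_*$ is conservative. Since a proper morphism with finite fibres is finite, it suffices to show every fibre $X_y$ is finite. I would first reduce to the case where $Y = \Spec k$ is a point. Take $y\in Y$ with residue field $k(y)$ and consider the base change $X_y \to \Spec k(y)$. Conservativity of the pushforward is stable under base change along $\Spec k(y)\to Y$ for qcqs maps, but only in the following sense. If $M\in \sfD(X_y)$ has $f_{y*}M=0$, then pushing $M$ forward to $X$ along $j\colon X_y\to X$ gives $f_*j_*M = g_*f_{y*}M = 0$, where $g\colon \Spec k(y)\to Y$. Conservativity of $f_*$ then forces $j_*M=0$. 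The map $j_*$ is conservative because $j$ is affine, being a base change of the affine map $g$, so $M=0$. This shows that fibres of $0$-affine maps are $0$-affine, with no derived subtleties, because we use pushforward rather than pullback. Hence we may assume $Y=\Spec k$ and $X$ is a proper $k$-scheme with $\RHom(\mcO_X,-)$ conservative, i.e.\ $\mcO_X$ generates $\sfD(X)$ by Theorem~\ref{affinevsgen}.

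It remains to show that a proper $k$-scheme $X$ on which $\mcO_X$ generates has dimension $0$. Suppose instead some irreducible component has positive dimension. Pick a closed integral curve $C\subseteq X$, or more generally an integral closed subscheme of positive dimension. By Lemma~\ref{lem:trick3} applied to the closed immersion $i\colon C\to X$, whose pushforward is conservative, $\mcO_C = i^*\mcO_X$ generates $\sfD(C)$. Here $C$ is proper over $k$, so we may reduce to $X$ integral and proper of dimension $d\ge 1$. I would then exhibit a nonzero object with vanishing hypercohomology. Since $X$ is projective-enough, Chow's lemma could be invoked, but it is cleaner to use a curve: by passing to a curve we may take $d=1$, and a proper integral curve over $k$ is projective. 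On a projective curve $C$ with $H^0(\mcO_C)=K$ a finite field extension, choose a line bundle $\mathcal{L}$ of degree $g-1$ with $H^0=H^1=0$. Such a bundle is available after a finite field extension, or more simply we can take $\mathcal{L}=\mathcal{O}(-D)$ for a large effective divisor: then $H^0=0$ and $H^1\neq 0$, and the complex $\mathcal{L}\to$ something must be adjusted. A more robust choice is to take the cone of a nonzero map to kill cohomology. Serre duality gives $\RHom(\mcO_C, \omega_C\otimes \mathcal{O}(-D))$ dual to $\RHom(\mathcal{O}(-D),\mcO_C)$, but again nonzero.

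The step I expect to be the main obstacle is producing the acyclic-cohomology object. The most reliable construction is a finite-dimensionality argument. Since $\mcO_X$ is a generator and $X$ is proper, $\sfD^\mathrm{perf}(X)=\thick(\mcO_X)$ would be equivalent to perfect modules over the finite-dimensional dg algebra $A=\RHom(\mcO_X,\mcO_X)$. In particular every perfect complex $E$ would have $\RHom(E,\mcO_X)$ with total cohomology bounded by a multiple of the "length" of $E$ as built from $\mcO_X$. However, perfect complexes $\mathcal{O}(n)$ for an ample $\mathcal{O}(1)$ on the curve have $\dim H^0(\mathcal{O}(n))\to\infty$, whereas $\thick(\mcO_X)\simeq \mathrm{perf}(A)$. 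This alone is not a contradiction. Instead I would use the $t$-structure/Hochschild argument: $\mathrm{perf}(A)$ for $A$ a finite-dimensional coconnective algebra with $H^0(A)=K$ a field has compact objects which are "bounded" in a precise sense. Concretely, $\RHom_A(M,N)$ is finite-dimensional for all perfect $M,N$, which holds anyway. So I would fall back on the K-theoretic obstruction: $K_0(\thick(\mcO_X))$ is generated by $[\mcO_X]$. It therefore has rank at most $1$ after rationalising, and the image of $K_0$ under the map to $\mathbb{Z}\oplus \mathrm{Pic}$ given by rank and determinant (or degree on a curve) would be generated by $(1,0)$. This contradicts $[\mathcal{O}(1)]$ having degree $\deg \mathcal{O}(1)>0$. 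This degree argument is what I would try first for the curve case; it uses only that degree is additive on triangles of perfect complexes on a projective curve.
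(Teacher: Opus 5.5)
Your reductions are sound. The fibre argument uses $f_*j_*\simeq g_*f_{y*}$ with $j\colon X_y\to X$ affine, and it correctly shows that $f_{y*}$ is conservative. Passing via Lemma~\ref{lem:trick3} to an integral projective curve $C$ with $\thick(\mcO_C)=\sfD^\mathrm{perf}(C)$ is also fine.

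The gap is in the final step, the assertion that $K_0(\thick(\mcO_C))$ is generated by $[\mcO_C]$. This ignores direct summands. By Thomason's classification of dense subcategories, an object of $\thick(\mcO_C)$ has class in $\mathbb{Z}[\mcO_C]$ if and only if it lies in the triangulated closure of $\mcO_C$, meaning iterated cones of shifts without idempotent completion. So your degree computation only shows that $\mathcal{O}(1)$ is not an iterated extension of shifts of $\mcO_C$. It does not show that $\mathcal{O}(1)$ fails to be a direct summand of such an object.

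The claim is false in general. For a Dedekind domain $R$ with nontrivial class group, $\thick(R)=\sfD^\mathrm{perf}(R)$, yet a non-principal invertible ideal $I$ has $\det[I]\neq 0$, so $[I]\notin\mathbb{Z}[R]$. The same happens for a nontrivial line bundle on the doubled-origin line of Example~\ref{ex:oxgens1}. Run verbatim, your rank/determinant argument would "prove" that $\mcO$ does not generate in these cases. Swapping determinant for degree does not change the logic, and properness never enters the $K_0$ claim.

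To close the gap you need an input showing that idempotent completion adds nothing here, and this is where the real content lies. One possible route is as follows. $\mathrm{End}(\mcO_C)=H^0(C,\mcO_C)$ is a field and $\mcO_C$ has no negative self-extensions, so $\mcO_C$ is simple-minded. Its extension closure should then be the heart of a bounded t-structure on the triangulated closure of $\mcO_C$. A triangulated category with a bounded t-structure is idempotent complete by a theorem of Le and Chen. The triangulated closure would then equal $\thick(\mcO_C)$, and your degree argument would go through.

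The paper takes a different route. It identifies $\sfD^\mathrm{perf}(Z)$ with $\sfD^\mathrm{perf}(A)$ for the proper coconnective dg algebra $A=R\Gamma(Z,\mcO_Z)$. It then uses the skyscraper at a regular point, a perfect module with cohomology in degree $0$, and a structural result on such dg algebras to force $A\simeq k$.
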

\begin{proof}
It is clear that if $f$ is finite it is $0$-affine (since it is just affine). For the converse it is enough to check being $0$-affine implies that the fibres of $f$ are finite by \cite[Lemma~02OG]{stacks-project}. So let $\Spec k \to Y$ be a point and let $f'\colon \pi_0(X_k) \to \Spec k$ be the classical fibre. The map $f'$ is also $0$-affine. 
		
Let $Z$ be a connected component of $\pi_0(X_k)$. Then, because $f'$ is $0$-affine, $\sfD^\mathrm{perf}(Z)$ is equivalent to a $\sfD^\mathrm{perf}(A)$ where $A = f'_*(\mcO_{\pi_0(Z)})$ is a proper coconnective dg algebra over $k$ with $\sfH^0(A)= k$. Because $k$ is a field and $A$ is coconnective we may model $A$ by a dg algebra such that $A^{<0} = 0$ and $A^0=k$. 

    Without loss of generality we can replace $Z$ by its reduction. Thus by virtue of being reduced and of finite type over a field there is a regular point $z\in Z$. Hence the dg algebra $A$ has a perfect module $\Hom(A, k(z))$ which has cohomology concentrated in degree $0$. It follows from \cite[Corollary~6.2]{MWdg} that $A$ is quasi-isomorphic to $k$ and hence $Z \cong \Spec k$ is a point. Thus the fibre $\pi_0(X_k)$ is $0$-dimensional as claimed.
\end{proof}

\begin{corollary}\label{cor:proper}
    Let $X$ be a proper scheme over a noetherian ring $R$. Then $\mcO_X$ generates $\sfD^\mathrm{perf}(X)$ if and only if $X$ is $\Spec S$ for a finite $R$-algebra $S$.
\end{corollary}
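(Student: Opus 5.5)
The plan is to combine Theorem~\ref{affinevsgen} with Theorem~\ref{thm:proper}, using the fact that the affinization of a proper scheme over a noetherian ring is finite over the base. Let $p\colon X\to \Spec R$ be the structure map. Since $p$ is proper and $R$ is noetherian, $S\colonequals \mcO_X(X) = \sfH^0(X,\mcO_X)$ is a finite $R$-algebra; this is coherence of proper pushforward. So $Y=\Spec S$ is finite over $\Spec R$, in particular noetherian, and $p$ factors as
\[
X \xrightarrow{a} Y \xrightarrow{q} \Spec R
\]
with $a$ the affinization and $q$ finite.

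First I will check that $a$ is proper. The map $p = q\circ a$ is proper and $q$ is separated, so by the standard cancellation property for proper morphisms $a$ is proper. Now suppose $\mcO_X$ generates $\sfD^\mathrm{perf}(X)$. By Theorem~\ref{affinevsgen} the map $a$ is $0$-affine. Since $a$ is proper with noetherian target $Y$, Theorem~\ref{thm:proper} shows that $a$ is finite, hence affine. So $X = \Spec a_*\mcO_X(Y)$. But $a_*\mcO_X$ is the quasi-coherent sheaf on $Y$ with global sections $\mcO_X(X)=S$, so $a_*\mcO_X=\mcO_Y$ and $X\cong \Spec S$, where $S$ is a finite $R$-algebra.

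For the converse, if $X=\Spec S$ is affine then $\mcO_X$ generates $\sfD(X)$, and therefore generates $\sfD^\mathrm{perf}(X)$. Equivalently, the affinization is an isomorphism and hence $0$-affine.

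No step here looks hard. The only inputs beyond the earlier theorems are the finiteness of $\sfH^0$ for proper morphisms over a noetherian base and the cancellation of properness. The one point needing care is confirming that the hypotheses of Theorem~\ref{thm:proper} hold for $a$: noetherianity of $Y$ comes from $S$ being finite over $R$, and properness of $a$ comes from cancellation.
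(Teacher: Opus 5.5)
Your proof is correct and follows the paper's argument: the affinization $a$ is proper, Theorem~\ref{affinevsgen} makes it $0$-affine, and Theorem~\ref{thm:proper} then makes it finite, so $X\cong\Spec S$. You also supply details the paper leaves implicit: finiteness of $S=\mcO_X(X)$ over $R$ (which makes $Y$ noetherian, as Theorem~\ref{thm:proper} requires) and properness of $a$ by cancellation.
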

\begin{proof}
    We only consider the non-trivial direction: suppose that $\mcO_X$ generates the perfect complexes over $X$. Then $a\colon X \to \Spec \mcO_X(X)$ is proper and by Theorem~\ref{affinevsgen} it is $0$-affine. So Theorem~\ref{thm:proper} tells us the morphism $a$ is actually finite (and in fact then $X$ is finite over $\Spec R$) and hence $X$ is affine and given by a finite $R$-algebra.
\end{proof}



\end{document}